\newtheorem{Theorem}{Theorem}[section]
 \newtheorem{Cor}{Corollary}
 \newtheorem{Proposition}[Theorem]{Proposition}
 \newtheorem{ex}[Theorem]{Example}
 \newtheorem{Remark}[Theorem]{Remark}
 \numberwithin{equation}{subsection}
\begin{document}
\title[DYNAMICS OF THE INDUCED SHIFT MAP]{DYNAMICS OF THE INDUCED SHIFT MAP}
\author{Puneet Sharma and Anima Nagar}
\email{puneet.iitd@yahoo.com, anima@maths.iitd.ac.in}%
\footnote{This work is a part of a Ph.D. thesis sucessfully defended in 2011.}
\thanks{The first author thanks CSIR  for financial support.}%
\subjclass{Primary 37E10; Secondary 37B99, 54H20}%
\keywords{symbolic dynamics, sequence spaces,   hyperspace,
induced map}

\begin{abstract}

In this article, we compare the dynamics of the shift map and its
induced counterpart on the hyperspace of the shift space. We show
that many of the properties of induced shift map can be easily
demonstrated by appropriate sequences of symbols. We compare the
dynamics of the shift system $(\Omega, \sigma)$ with its induced
counterpart $(\mathcal{K}(\Omega),\overline{\sigma})$, where
$\mathcal{K}(\Omega)$ is the hyperspace of all nonempty compact
subsets of $\Omega$. Recently, such comparisons have been studied a
lot for general spaces. We continue the same study in case of shift
spaces, and bring out the significance of such a study in terms of
sequences.

We compare the mixing properties,  denseness of periodic points,
various forms of sensitivity and expansivity of the shift map and
its induced counterpart. In particular, we show their equivalence in
case of the full shift.  We also look into the special case of
subshifts of finite type, and in particular prove that the
properties of weakly mixing, mixing and sensitivity are equivalent
in both systems. And in the case of any general subshift, we show
that the concept of cofinite sensitivity is equivalent in both
systems and for transitive subshifts, cofinite sensitivity is
equivalent to syndectic sensitivity. In the process we prove that
all sturmian subshifts are cofinitely sensitive.
\end{abstract}
\maketitle

\section{INTRODUCTION}

Many times  dynamical systems are studied by discretizing both time
and state space. The basic idea involves in taking a partition of
the state space into finite number of regions, each of which can be
labelled with some symbol. Time is then discretized by taking
iterates of all points in the space. Each itinerary in the state
space then corresponds to an infinite sequence of symbols, where the
symbols are the labels  of the region in the partition given by the
trajectory of the point. This `Symbolic Dynamics' though gives an
approximation of the actual orbits, but is very useful in capturing
the essence of any dynamics.

Symbolic systems are important classes of dynamical systems and have
great applicability to topological dynamics and ergodic theory.
Their equivalence with many topological dynamical systems and simple
computational structure makes them  an important class of dynamical
systems. They have also been used to approximate various natural
processes and predict their long term behavior. Further, it has been
seen that most of the dynamical systems, observed in Nature, are
collective(set valued) dynamics of many units of individual systems.
In particular, the asymptotic behavior of the iterates of any
non-empty subset of the space becomes an important study. Hence,
there is a strong need to develop a relation between the dynamics on
the base space and the hyperspace(space of subsets). Such a study
can help in understanding the combined dynamics on systems, which on
an individual basis may not be that interesting. This has lead to
the study of `set-valued dynamics'. Roman Flores \cite{ro}, Banks
\cite{ba}, Liao, et al \cite{th}, Sharma and Nagar \cite{sn1,sn2}
have given a comparison of individual dynamics and set-valued
dynamics. On the other hand, it has been observed that dynamical
systems can be better studied via symbolic dynamics \cite{ki, ml,
py}. Also in \cite{fu}, it is shown that any dynamical system can be
realized as a subshift of some shift space. 

Some recent studies of dynamical systems, in branches of
engineering and physical sciences, have revealed that the
underlying dynamics is set valued or collective, instead of the
normal individual kind which is usually studied. Some recent
studies in Population Dynamics, consider population as local
subpopulation in discrete habitat patches, with independent
dynamics. This initiates the study of metapopulation dynamics (
see \cite{rr}). In Chemical Physics, the individual dynamics of
the electron and the nuclei are combined to stimulate the dynamics
of large molecular systems containing thousands of atoms ( see
\cite{sg}). In Atmospheric Sciences, the perturbation of waves is
studied as a combined effect of the near-surface,
intermediate-level and tropopause-level perturbations upon flow
development ( see \cite{dd}). In Mechanical Engineering recently,
lane keeping controllers have been specifically designed, so that
they can be coupled with steering force feedback for better
maintenance of lane position in absence of driver steering
commands. Artificial damping is further injected to make the
combined effect of the system stable, ensuring risk free and safe
driving ( see \cite{srcg}).

With these varieties of dynamics observed, there arises the need of
a topological treatment of such collective dynamics. Also, the
evolution of trajectories of a chaotic dynamical system is
equivalent to symbolic dynamics in an appropriate symbol system.
Hence, there is a strong need to develop a relation between the
dynamics on the shift system and its induced counterpart on its
hyperspace.

In this article, we study the relations between the dynamical
behavior of the shift $(\Omega, \sigma)$ and its induced counterpart
$(\mathcal{K}(\Omega), \overline{\sigma})$. We consequently show
that many of the chaotic properties of $(\mathcal{K}(\Omega),
\overline{\sigma})$ can be easily exhibited by the sequences in
$\Omega$.


\section{THEORETICAL PRELIMINARIES}

We now introduce some basics from dynamical systems, hyperspace
topologies and symbolic dynamics.

\subsection{Dynamical Systems}

Let $(X, \tau)$ (resp. $(X,d)$) be a topological (resp. metric)
space and let $f : X \rightarrow X$ be a continuous function. The
pair $(X,f)$ is referred as a dynamical system. We state some
dynamical properties here, though we refer to
\cite{ak,bc,bs,de,subru} for more details.

A point $x \in X$ is called \textit{periodic} if $ f^n (x)=x$ for
some positive integer $n$, where $f^n = f\circ f \circ f\circ
\ldots \circ f$ ($n$ times). The least such $n$ is called the
\textit{period} of the point $x$.  If there exists a $\delta > 0$
  such that for every $x \in X$ and for each $\epsilon > 0$ there
exists $y \in X$ and a positive integer $n$ such that $d(x,y) <
\epsilon$ and $ d(f^n(x), f^n(y)) > \delta$, then $f$ is said to
be \textit{sensitive} ($\delta$-sensitive). The constant $\delta$
is called the \textit{sensitivity constant} for $f$.  $f$ is said
to be \textit{cofinitely sensitive}, if there exists $\delta
>0$ such that the set of instances
$N_f (U, \delta) = \{n \in \mathbb{N} :$ there exist   $ y, z \in
U$ with $d(f^n(y), f^n(z)) > \delta \}$ is cofinite.  $f$ is
called \textit{syndetically sensitive} if there exists $\delta >
0$ with the property that for every $\epsilon$-neighborhood $U$ of
$x$,  $N_f (U, \delta)$ is syndetic. In general,

\centerline{$\textit{cofinitely sensitive} \Rightarrow
\textit{syndetically sensitive} \Rightarrow  \textit{sensitive}$}

  $f$ is \textit{Li-Yorke sensitive} if
there exists $\delta >0$ such that for each $x \in X$ and for each
$\epsilon >0$, there exists $y \in X$ with $d(x,y) < \epsilon$
such that $\liminf \limits_{n \rightarrow \infty} d(f^n(x),
f^n(y)) =0$ but $\limsup \limits_{n \rightarrow \infty} d(f^n(x),
f^n(y)) > \delta$. A very strong form of sensitivity is
expansivity. $f$ is called \textit{expansive} ($\delta$-expansive)
if for any pair of distinct elements $x,y \in X$, there exists $k
\in \mathbb{N}$ such that $d(f^k(x), f^k(y)) > \delta$.

$f$ is called \textit{transitive} if for any pair of non-empty
open sets $U,V$ in $X$, there exist a positive integer $n$ such
that $f^n(U) \bigcap V \neq \phi$, and is called \textit{totally
transitive} if $f^n$ is transitive for each $n \in \mathbb{N}$.
$f$ is called \textit{weakly mixing} if $f \times f$ is
transitive.   $f$ is called \textit{mixing} or
\textit{topologically mixing} if for each pair of non-empty open
sets $U,V$ in $X$, there exists a positive integer $k$ such that
$f^n(U) \bigcap V \neq \phi$ for all $n \geq k$.  $f$ is called
\textit{locally eventually onto (leo)} if for each non-empty open
set $U$, there exists a positive integer $k \in \mathbb{N}$ such
that $f^k(U) = X$. Among the above topological properties, the
following relation holds,

\centerline{$\textit{leo} \Rightarrow \textit{topological mixing}
\Rightarrow \textit{weakly mixing} \Rightarrow \textit{totally
transitivity} \Rightarrow \textit{transitivity}$}

\subsection{Hyperspace Topologies}

For a Hausdorff space $(X, \tau)$, a hyperspace $(\mathcal{K}(X),
\Delta)$ comprises of all nonempty compact subsets of $X$ endowed
with the topology $\Delta$, where the topology $\Delta$ is defined
using the topology $\tau$ of $X$.  The topology $\Delta$,  that we
consider here will be either the Vietoris topology or the Hausdorff
Metric topology (when $X$ is a metric space). We briefly describe
these topologies.

Define, $ \langle U_1, U_2, \ldots, U_k \rangle$ = $ \{E \in
\mathcal{K}(X) :E \subseteq \bigcup \limits_{i=1}^k U_{i}$ and E
$\bigcap U_{i} \neq \phi \textrm{ } \forall i \}$. The topology
generated by the collection of all such sets, where $k$ varies over
all possible natural numbers and $U_i$ varies over all possible open
subsets of $X$, is known as the \textit{Vietoris topology}.

For a metric space $(X,d)$ and for any two non-empty compact
subsets $A_1, A_2$ of $X$, define, $ d_H (A_1, A_2) = \inf \{
\epsilon >0 : A \subseteq S_{\epsilon} (B) \text{ and } B
\subseteq S_{\epsilon} (A) \} $ where $S_{\epsilon}(A) = \bigcup
\limits_{x \in A} S(x, \epsilon)$ and $S(x, \epsilon) = \{ y \in X
: d(x,y)< \epsilon \}$. $d_H$  is a metric on $\mathcal{K}(X)$ and
is known as the \textit{Hausdorff metric}, which generates  the
\textit{Hausdorff metric topology} on $\mathcal{K}(X)$.

It is known that $\mathcal{K}(X)$ is compact if and only if $X$ is
compact and in this case, the Hausdorff metric topology is
equivalent to the Vietoris topology. Also, it is known that the
collection of finite sets is dense in $\mathcal{K}(X)$. We can talk
of these topologies for any subspace $\Psi$ of $\mathcal{K}(X)$. See
\cite{be,mi} for details.

\subsection{Symbolic Dynamics}

We study the sequence space generated by a symbol set
$\mathcal{A}$, where $|\mathcal{A}|$ may be finite or infinite. In
general, we study the sequence space $\mathcal{A}^{\mathbb{N}}$ or
$\mathcal{A}^{\mathbb{Z}}$.

Let $\mathcal{A}$ be a discrete alphabet set ($|\mathcal{A}|$ may
be finite or infinite).  Let $\Sigma_{\mathcal{A}} =
\mathcal{A}^{\mathbb{N}}$ be the space of all infinite sequences
over $\mathcal{A}$. Define $D_1 : \Sigma_{\mathcal{A}} \times
\Sigma_{\mathcal{A}} \rightarrow \mathbb{R}^{+}$  as,

\centerline{$D_1 (\bar{x},\bar{y}) = \sum \limits_{i= 0}^{\infty}
\frac{\delta(x_i, y_i)}{2^{i}}$}

where, $\bar{x} = (x_i), \bar{y} = (y_i)$ and $\delta$ is the
discrete metric.

Then, $D_1$ defines a metric which generates the product topology
on $\mathcal{A}^{\mathbb{N}}$. Similarly, $D_1 (\bar{x},\bar{y}) =
\sum \limits_{i= - \infty}^{\infty} \frac{\delta(x_i,
y_i)}{2^{|i|}}$ defines a metric which generates the product
topology on $\mathcal{A}^{\mathbb{Z}}$, the space of all bifinite
sequences over $\mathcal{A}$.

It can be seen that the set $[i_0 i_1 \ldots i_k] = \{ (x_n) :
x_r=i_r, 0 \leq r \leq k \}$ is a clopen set in
$\Sigma_{\mathcal{A}}$, and is referred to as a \textit{cylinder
set}. Any open set, in $\mathcal{A}^{\mathbb{N}}$, is a countable
union of such sets. Consequently, the cylinder sets form a basis
for the product topology on $\Sigma_{\mathcal{A}}$. The
\textit{shift} (left shift) operator, defined as
 $\sigma(x_0 x_1 \ldots) = x_1 x_2 x_3 \ldots $
 is known to be continuous when the space is
equipped with the metric $D_1$. We refer the  system
$(\mathcal{A}^{\mathbb{N}}, \sigma)$ as the \textit{full shift
(shift)} space.

When $\mathcal{A}$ is a finite set, $\mathcal{A}^{\mathbb{N}}$
(resp. $\mathcal{A}^{\mathbb{Z}}$) is a compact metrizable space.
Let $\Sigma \subseteq \Sigma_{\mathcal{A}}$ be a closed
$\sigma$-invariant subset of $\Sigma_{\mathcal{A}}$.  If there
exists a finite collection of words (finite strings) that are
forbidden in any sequence in $\Sigma$, then the subsystem
$(\Sigma, \sigma)$ is called a \textit{subshift of finite type}.
Every subshift of finite type can be represented by a $\{0,1\}$
square matrix. In such a case, the matrix $M$ is called the
\textit{transition matrix} for the space $\Sigma_{M}$.

It may be noted that the subsystems ``subshifts of finite type" can
be considered also when the symbol set $\mathcal{A}$ is infinite.
 We will, however, not consider such cases.

Let $M$ be a transition matrix. $M$ is said to be
\textit{irreducible} if for every pair of indices $i$ and $j$
there is an $l >0$ with $(M^l)_{ij} >0$. Fix an index $i$ and let
$p(i) = \gcd \{ l : (M^l)_{ii}>0 \}$. This is called the
\textit{period} of the index $i$. When $M$ is irreducible, period
of every index is same and is called the \textit{period of $M$}.
If the matrix has period one, it is said to be \textit{aperiodic}
(see \cite{ki, ml, py}). Also, $\Sigma_M$ is transitive if and
only if $M$ is irreducible. $M$ is irreducible and aperiodic if
and only if there exists $r \in \mathbb{N}$ such that for all $k
\geq r$, $M^k$ is strictly positive. $\Sigma_M$ is topological
mixing if and only if $M$ is irreducible and aperiodic. Also, from
\cite{subru}  the following are equivalent.

1. $\Sigma_M$ is totally transitive

2. $\Sigma_M$ is weakly mixing.

3. $\Sigma_M$ is topological mixing.

We now discuss the case when the alphabets in $\mathcal{A}$ are
elements of some metric space, i.e. when $(\mathcal{A},d)$ is a
general metric space. Then, $\mathcal{A}^{\mathbb{N}}$ equipped
with the metric

\centerline{$D_2 (\bar{x},\bar{y}) = \sum \limits_{i= 0}^{\infty}
\frac{1}{2^{i}} \frac{d(x_i, y_i)}{1 + d(x_i, y_i)}$}

generates the product topology on $\mathcal{A}^{\mathbb{N}}$.

For any set of symbols $\mathcal{A}$,  let $\mathcal{A}^\mathbb{N}$
be endowed with metric $D$ (which is $D_1$ or $D_2$, depending on
the space $\mathcal{A}$). In all such cases,
$(\mathcal{A}^{\mathbb{N}}, \sigma)$ is a dynamical system.

From \cite{fu} we see that if $(X,f)$ be a compact dynamical system,
and  $\Sigma = \{ (x, f(x),\\ f^2(x), \ldots, f^n(x), \ldots) : x
\in X \}$, then $\Sigma$ is a shift invariant subset of
$X^{\mathbb{N}}$ and thus, $(\Sigma, \sigma)$ is a subsystem of the
full shift $X^{\mathbb{N}}$. Also, if we define, $ \phi : X
\rightarrow \Sigma $

 \centerline{$ \phi(x)
= (x, f(x), f^2(x), \ldots, f^n(x), \ldots)$}

Then, $\phi$ is one-one, onto and continuous function satisfying
the relation $ \phi \circ f = \sigma \circ \phi$. Thus, the system
$(X,f)$ is conjugate to the system $(\Sigma, \sigma)$. This leads
to the observation that for any dynamical system $(X,f)$, there
exists $\Sigma \subseteq X^{\mathbb{N}}$ such that the system
$(X,f)$ is conjugate to the system $(\Sigma, \sigma)$.

Also, \cite{subru} proves that a point in  $ \Sigma$ is a point of
sensitivity for the system $(\Sigma, \sigma)$ if and only if it is
not isolated. Though, in \cite{subru}, this
 has been proved for the case of a discrete alphabet
set, it can be easily established for the case when the alphabet
set is any general metric space.

Hence, to study any dynamical system, it is sufficient to study a
subsystem of an appropriate symbolic  system. Henceforth, we shall
constrain ourselves to the subsystems $(\Omega, \sigma)$ of the
symbolic space $(\mathcal{A}^{\mathbb{N}}, \sigma)$, where the
symbol set $\mathcal{A}$ comprises of the points in the metric
space $(\mathcal{A},d)$, where $d$ is the discrete metric in case
$\mathcal{A}$ is a discrete alphabet set.

\section{MAIN RESULTS}

\subsection{ We first consider the case when $(\Omega, \sigma)$ is the full
shift, i.e. $\Omega = \mathcal{A}^{\mathbb{N}}$.}

It has been shown that if  $(X,f)$ has dense set of periodic points,
$(\mathcal{K}(X), \overline{f})$ also has the same \cite{ba,sn1}.
We, prove the same result in terms of sequences.

\begin{Proposition}
$(\mathcal{K}(\Omega), \overline{\sigma})$ has dense set of periodic
points.
\end{Proposition}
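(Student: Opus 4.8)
The plan is to verify density directly against the Vietoris basis described above, since a collection of periodic points is dense exactly when it meets every nonempty basic open set $\langle U_1, U_2, \ldots, U_k \rangle$. So I would fix such a nonempty basic open set and construct inside it a single compact set $E$ with $\overline{\sigma}^n(E) = E$ for some $n \in \mathbb{N}$.

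First I would exploit the fact that the cylinder sets form a basis for the product topology on $\Omega$. For each index $i$ I choose a point of $U_i$ together with a cylinder neighbourhood of it contained in $U_i$; enlarging lengths if necessary, I may assume all these cylinders have a common length $N$, say $[w_i] \subseteq U_i$ with $w_i$ a word of length $N$. Because $\Omega = \mathcal{A}^{\mathbb{N}}$ is the \emph{full} shift, the purely periodic sequence $y^{(i)} = (w_i w_i w_i \cdots)$ obtained by endlessly repeating the block $w_i$ is a legitimate element of $\Omega$, and by construction $y^{(i)} \in [w_i] \subseteq U_i$. I then set $E = \{ y^{(1)}, y^{(2)}, \ldots, y^{(k)} \}$, a finite and hence compact subset of $\Omega$.

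It remains to check the two required facts. Since $y^{(i)} \in U_i \subseteq \bigcup_{j} U_j$ for every $i$, and $y^{(i)} \in E \cap U_i$, the set $E$ belongs to $\langle U_1, \ldots, U_k \rangle$. Moreover each $y^{(i)}$ is fixed by $\sigma^{N}$ (the block length is $N$ for all $i$), so $\overline{\sigma}^{N}(E) = \sigma^{N}(E) = \{ \sigma^{N}(y^{(1)}), \ldots, \sigma^{N}(y^{(k)}) \} = \{ y^{(1)}, \ldots, y^{(k)} \} = E$, which shows $E$ is genuinely a periodic point of $\overline{\sigma}$ and not merely eventually periodic. This places a periodic point in the arbitrary basic open set, giving density.

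The step carrying the real content is the construction of the $y^{(i)}$: I must realise each chosen cylinder by an honestly $\sigma$-periodic sequence and, at the same time, arrange a single exponent $n$ that returns the \emph{whole} finite set to itself. The first requirement is exactly where fullness is used, since every finite block is admissible and repeating it is always legal; this is precisely the freedom that would fail for a general subshift, where a prescribed block need not extend to any periodic point. The common-period issue is harmless: padding all the words to a common length $N$ (equivalently, replacing $N$ by the least common multiple of the individual block lengths) makes one exponent work for every point of $E$ simultaneously.
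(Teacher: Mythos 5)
Your proof is correct and is essentially the paper's own argument: both approximate an arbitrary open set in $\mathcal{K}(\Omega)$ by a finite set of sequences and replace each sequence by the periodic point obtained by repeating an initial block, arranging a common period so that the finite set itself is periodic under $\overline{\sigma}$. The only difference is bookkeeping — you test density against Vietoris basic open sets $\langle U_1,\ldots,U_k\rangle$ and cylinders, while the paper works with a Hausdorff-metric ball $S(\{\bar{x}^1,\ldots,\bar{x}^s\},\tfrac{1}{2^r})$ around a dense finite set — and these amount to the same thing on $\mathcal{K}(\Omega)$.
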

\begin{proof}
Let $\mathcal{U} \subset \mathcal{K}(\Omega)$ be any open set. As
the set of finite  sequences in $\Omega$ is dense in
$\mathcal{K}(\Omega)$, let $\{\bar{x}^1, \bar{x}^2, \ldots,
\bar{x}^s \} \in \mathcal{U}$, where $\bar{x}^j = (x^j_n)$. Then,
there exists $r \in \mathbb{N}$ such that $S(\{\bar{x}^1, \bar{x}^2,
\ldots, \bar{x}^s \}, \frac{1}{2^r}) \subset \mathcal{U}$.

Let $\bar{y}^j = x^j_0x^j_1 \ldots x^j_{r}x^j_0x^j_1 \ldots
x^j_{r}x^j_0x^j_1 \ldots x^j_{r} \ldots \ $ for $1 \leq j \leq s$.

Each $\bar{y}^j$ is periodic under $\sigma$ with period $r+1$ and
$D(\bar{x}^j, \bar{y}^j) < \frac{1}{2^r}$.

Thus, $\{\bar{y}^1, \bar{y}^2, \ldots, \bar{y}^s \}$ is a periodic
point in $\mathcal{U}$ with period $r+1$.
\end{proof}

The system $(\Omega, \sigma)$ is  locally eventually onto for any
alphabet set $\mathcal{A}$. And we also have

\begin{Proposition}
$(\mathcal{K}(\Omega), \overline{\sigma})$ is locally eventually
onto.
\end{Proposition}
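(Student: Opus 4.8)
The plan is to exploit that the full shift $(\Omega,\sigma)$ is itself leo and to lift the explicit preimage construction from the base space to the hyperspace. The key identity is $\overline{\sigma}^{\,k}(A)=\sigma^k(A)$ for every $A\in\mathcal{K}(\Omega)$, so it suffices to produce, for a given nonempty open $\mathcal{U}\subseteq\mathcal{K}(\Omega)$, an integer $N$ with $\overline{\sigma}^{\,N}(\mathcal{U})=\mathcal{K}(\Omega)$.

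First I would shrink $\mathcal{U}$ to a basic Vietoris open set. Since the cylinder sets form a basis for $\Omega$, I can choose a basic open set $\mathcal{V}=\langle C_1,\ldots,C_m\rangle\subseteq\mathcal{U}$ in which each $C_i=[w_i]$ is a cylinder determined by a finite word $w_i$. Let $N$ be at least the maximum of the lengths of the $w_i$, and for each $i$ fix a word $v_i$ of length exactly $N$ whose initial segment is $w_i$ (pad $w_i$ by repeating any fixed symbol of $\mathcal{A}$, which is nonempty). Because $\mathcal{V}\subseteq\mathcal{U}$, it is enough to show $\overline{\sigma}^{\,N}(\mathcal{V})=\mathcal{K}(\Omega)$, and for this I would prove surjectivity of $\overline{\sigma}^{\,N}$ restricted to $\mathcal{V}$.

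The core step is to realize an arbitrary $K\in\mathcal{K}(\Omega)$ as $\overline{\sigma}^{\,N}(A)$ for some $A\in\mathcal{V}$. I would set
\[
A=\bigcup_{i=1}^{m}\{\,v_i\bar{y}:\bar{y}\in K\,\},
\]
where $v_i\bar{y}$ denotes the sequence obtained by prepending the length-$N$ word $v_i$ to $\bar{y}$. Three facts then have to be verified, all by construction: since $v_i$ has length $N$ we get $\sigma^N(v_i\bar{y})=\bar{y}$, hence $\sigma^N(A)=K$; each $v_i\bar{y}$ begins with $w_i$, so $A\subseteq\bigcup_i C_i$ and $A\cap C_i\neq\emptyset$ for each $i$, that is $A\in\langle C_1,\ldots,C_m\rangle=\mathcal{V}$; and $A$ is compact because the prepending map $\bar{y}\mapsto v_i\bar{y}$ is continuous, so each piece is a continuous image of the compact set $K$ and $A$ is a finite union of such images.

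The only genuinely delicate point is this last one: one must confirm that the constructed $A$ actually lies in $\mathcal{K}(\Omega)$ (nonempty and compact) and simultaneously in $\mathcal{V}$, rather than being merely some subset with the correct shift image. Once $A\in\mathcal{V}$ with $\overline{\sigma}^{\,N}(A)=K$ is established for arbitrary $K\in\mathcal{K}(\Omega)$, surjectivity gives $\overline{\sigma}^{\,N}(\mathcal{V})=\mathcal{K}(\Omega)$, whence $\overline{\sigma}^{\,N}(\mathcal{U})=\mathcal{K}(\Omega)$, proving that $(\mathcal{K}(\Omega),\overline{\sigma})$ is locally eventually onto.
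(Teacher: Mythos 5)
Your proof is correct and takes essentially the same approach as the paper: both realize an arbitrary $K\in\mathcal{K}(\Omega)$ as the image under an iterate of $\overline{\sigma}$ of a finite union of sets obtained by prepending fixed finite words to every element of $K$, and then check that this union is compact and lies in the given open set. The only difference is cosmetic --- you shrink to a basic Vietoris set of cylinders $\langle [w_1],\ldots,[w_m]\rangle$, whereas the paper shrinks to a Hausdorff-metric ball around a dense finite set $\{\bar{x}^1,\ldots,\bar{x}^s\}$ and prepends the initial segments $x^i_0\ldots x^i_r$.
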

\begin{proof}
Let $\mathcal{U}$ be a non empty open set in $\mathcal{K}(\Omega)$.
As set of finite sequences is dense in $\mathcal{K}(\Omega)$, let
$\{\bar{x}^1, \bar{x}^2, \ldots, \bar{x}^s \} \in \mathcal{U}$ where
$\bar{x}^i = (x^i_j)_{j \in \mathbb{N}}$ . Thus, there exists $r \in
\mathbb{N}$ such that $S(\{\bar{x}^1, \bar{x}^2, \ldots, \bar{x}^s
\}, \frac{1}{2^r}) \subset \mathcal{U}$.

We shall show that $\overline{\sigma}^r(S(\{\bar{x}^1, \bar{x}^2,
\ldots, \bar{x}^s \}, \frac{1}{2^r})) = \mathcal{K}(\Omega)$. Let
$K \in \mathcal{K}(\Omega)$. Let $A_i = \{ x^i_0 x^i_1 x^i_2
\ldots x^i_r \bar{a} : \bar{a} \in K \}$. Then, $A_i$ is a compact
subset of $\Omega$. Let $M = \bigcup \limits_{i=1}^s A_i$. Then,
$M \in S(\{\bar{x}^1, \bar{x}^2, \ldots, \bar{x}^s \},
\frac{1}{2^r})$. Also, $\overline{\sigma}^{r+1}(M) = K$. As $K \in
\mathcal{K}(\Omega)$ was arbitrary,
$\overline{\sigma}^{r+1}(S(\{\bar{x}^1, \bar{x}^2, \ldots,
\bar{x}^s \}, \frac{1}{2^r})) = \mathcal{K}(\Omega)$. Hence the
result.
\end{proof}

This can also be seen in a general case, but our proof is
specialized for sequences. The case of sensitivity is not very
simple in general(see \cite{sn2}). But, we observe that $(\Omega,
\sigma)$ is sensitive for any discrete alphabet set $\mathcal{A}$,
with sensitivity constant $\frac{1}{2}$. Similarly,

\begin{Proposition}
$(\mathcal{K}(\Omega), \overline{\sigma})$ is sensitive with
sensitivity constant $\frac{1}{2}$.
\end{Proposition}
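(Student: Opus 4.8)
The plan is to reproduce, at the level of the hyperspace, the elementary sequence argument that makes $(\Omega,\sigma)$ sensitive, but with one essential modification. A naive one-point perturbation $K\cup\{\bar b\}$ of a compact set $K$ will generally \emph{not} separate under $\overline{\sigma}$, because $\sigma^n(K)$ may already contain points arbitrarily close to $\sigma^n(\bar b)$, so the distance $D(\sigma^n(\bar b),\sigma^n(K))$ need not stay large. Instead I would perturb $K$ by collapsing the tail of every sequence in $K$ to a fixed constant word; this forces the whole image under $\overline{\sigma}^{\,n}$ to contract to a single sequence, while $\overline{\sigma}^{\,n}(K)$ retains a far-away point, and the separation then comes from the resulting one-sided Hausdorff gap.

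Fix $\delta=\tfrac12$ and let $K\in\mathcal K(\Omega)$ and $\epsilon>0$ be given; I may assume $|\mathcal A|\ge 2$, since otherwise $\Omega$ is a single point. First I would choose $r\in\mathbb N$ with $1/2^r<\epsilon$, pick any $\bar a=(a_n)\in K$, and select a symbol $t\in\mathcal A$ with $t\neq a_{r+1}$, which exists because $\mathcal A$ has at least two symbols. Then I define the map $\psi$ sending $(x_n)\mapsto x_0x_1\cdots x_r\,t\,t\,t\cdots$ and set $\mathcal L=\psi(K)$. Since $\psi$ is continuous (indeed locally constant on cylinder sets), $\mathcal L$ is a nonempty compact subset of $\Omega$, hence a genuine point of $\mathcal K(\Omega)$.

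Two routine verifications then finish the argument. Each $\psi(\bar x)$ agrees with $\bar x$ on coordinates $0,\dots,r$, so $D(\bar x,\psi(\bar x))\le 1/2^r$; as this bounds both one-sided distances, $d_H(K,\mathcal L)\le 1/2^r<\epsilon$. On the other hand $\sigma^{r+1}(\psi(\bar x))=\bar t$, the constant sequence $ttt\cdots$, for \emph{every} $\bar x$, whence $\overline{\sigma}^{\,r+1}(\mathcal L)=\{\bar t\}$ is a singleton, while $\overline{\sigma}^{\,r+1}(K)=\sigma^{r+1}(K)$ contains $\sigma^{r+1}(\bar a)=(a_{r+1},a_{r+2},\dots)$, whose zeroth coordinate $a_{r+1}$ differs from $t$. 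Thus $D(\sigma^{r+1}(\bar a),\bar t)\ge 1$, and since $d_H(A,\{\bar t\})=\sup_{q\in A}D(q,\bar t)$, I obtain $d_H\bigl(\overline{\sigma}^{\,r+1}(K),\overline{\sigma}^{\,r+1}(\mathcal L)\bigr)\ge 1>\tfrac12$. This exhibits, for the fixed constant $\tfrac12$ and arbitrary $K,\epsilon$, a point $\mathcal L$ within $\epsilon$ of $K$ together with an iterate separating the two orbits by more than $\tfrac12$, which is exactly $\tfrac12$-sensitivity.

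The one genuinely non-obvious point, and the main obstacle, is precisely the opening observation: one must resist perturbing $K$ by a single added sequence and instead collapse the entire set, so that the separation arises from the gap between a degenerate (singleton) image and a non-degenerate one. I would also note that the conclusion follows alternatively from the preceding Proposition, since a locally eventually onto map on a space of positive diameter is automatically sensitive; but the collapsing construction keeps everything explicit in terms of sequences and delivers the stated constant directly.
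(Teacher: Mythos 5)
Your proof is correct, and the separating mechanism is exactly the one the paper uses: truncate after a coordinate $r$ chosen so that $1/2^r<\epsilon$, append a constant tail $ttt\cdots$ with $t$ different from the $(r+1)$-st symbol of a reference sequence, and exploit the fact that the perturbed set collapses under $\overline{\sigma}^{\,r+1}$ to the singleton $\{\bar t\}$ while the image of the original set retains a point at distance $\geq 1$ from $\bar t$. Where you genuinely differ is in how an arbitrary compact set is handled. The paper first invokes density of finite sets in $\mathcal{K}(\Omega)$ and performs the tail replacement only on a finite set $A=\{\bar x^1,\dots,\bar x^s\}$; the reduction ``sensitivity on a dense family implies sensitivity everywhere'' is left implicit, and in general it costs a factor of two in the sensitivity constant --- harmless here only because the achieved separation is $\geq 1$ rather than merely $>\tfrac12$, but it is an unproved step. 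You instead apply the collapsing map $\psi(x_0x_1\cdots)=x_0\cdots x_r\,t\,t\,t\cdots$ to all of $K$ at once, using its continuity (it is locally constant on cylinders of length $r+1$) to see that $\psi(K)$ is a legitimate element of $\mathcal{K}(\Omega)$; this makes the argument self-contained for arbitrary $K$, delivers the constant $\tfrac12$ with no slack, and dispenses with the dense-set reduction entirely. Like the paper's proof, yours uses that $\Omega$ is the full shift (so that tail-modified sequences remain in $\Omega$) and implicitly that $|\mathcal{A}|\geq 2$, which you state explicitly. Your opening diagnosis of why a one-point perturbation fails, and your closing remark that sensitivity also follows from the locally-eventually-onto property established in the preceding proposition, are both sound.
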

\begin{proof}
To establish sensitivity on the induced system, it is sufficient
to prove sensitivity on the collection of all finite subsets,
since finite sets are dense in $\mathcal{K}(\Omega)$.

Let $A = \{\bar{x}^1, \bar{x}^2, \ldots, \bar{x}^s \}$ be a finite
subset where $\bar{x}^i = x^i_0 x^i_1 x^i_2 \ldots$. Let $\epsilon
>0$ and let $S_{\epsilon}(A)$ be an $\epsilon$-neighborhood of $A$.
Let $n \in \mathbb{N}$ such that $\frac{1}{2^n} < \epsilon$. Pick $x
\in \mathcal{A}$ such that $x \neq x^1_{n+1}$.

Let $B = \{\bar{y}^1, \bar{y}^2, \ldots, \bar{y}^s\}$ where
$\bar{y}^i = x^i_0 x^i_1 x^i_2 \ldots x^i_n x x x \ldots$.

Then, $B \in S_{\epsilon}(A)$ and $\overline{\sigma}^{n+1}(B) =
\{x x x \ldots\}$.

\centerline{Consequently, $d_H(\overline{\sigma}^{n+1}(A),
\overline{\sigma}^{n+1}(B))
> \frac{1}{2}$.}
\end{proof}

For sensitivity, we use the discrete metric on $\mathcal{A}$ to
establish the sensitivity on the hyperspace. However, if
$\mathcal{A}$ is equipped with a metric $d$ and $diam(\mathcal{A})=
r$, a similar proof establishes the sensitivity for the induced map
on the hyperspace, with sensitivity constant $\frac{r}{2(r+1)}$.

\begin{Proposition}
For any alphabet set $\mathcal{A}$, containing at least two
elements,

1. $(\Omega, \sigma)$ is Li-Yorke sensitive.

2. $(\mathcal{K}(\Omega), \overline{\sigma})$ is Li-Yorke
sensitive.
\end{Proposition}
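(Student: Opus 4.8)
The plan is to handle both statements with a single device: given a point and a tolerance, I perturb it only on a \emph{sparse} set of coordinates $p_1<p_2<\cdots$ whose gaps grow to infinity, flipping (for part 1) or overwriting (for part 2) the symbol at each $p_k$. The growing gaps will force $\liminf=0$, while a disagreement sitting in coordinate $0$ after the shift $\sigma^{p_k}$ (respectively $\overline{\sigma}^{p_k}$) will force $\limsup\ge 1$. Throughout I take the sensitivity constant $\delta=\tfrac12$. I work with the discrete metric $D_1$; a general metric alphabet is identical after replacing $1$ by the corresponding coordinate-$0$ contribution, exactly as in the remark following the sensitivity proposition above.

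For part 1, fix $\bar{x}=(x_i)\in\Omega$ and $\epsilon>0$, choose $n$ with $2^{-n}<\epsilon$, and pick positions $n<p_1<p_2<\cdots$ with $g_k:=p_{k+1}-p_k-1\to\infty$ (say $p_k=n+2^k$). Define $\bar{y}$ by $y_i=x_i$ off $\{p_k\}$ and $y_{p_k}\ne x_{p_k}$, possible since $|\mathcal{A}|\ge 2$. Then $D_1(\bar{x},\bar{y})=\sum_k 2^{-p_k}<\epsilon$. At time $p_k$ the shifted sequences disagree in coordinate $0$, so $D_1(\sigma^{p_k}\bar{x},\sigma^{p_k}\bar{y})\ge 1$, giving $\limsup\ge 1>\tfrac12$; at time $p_k+1$ the first disagreement lies at coordinate $g_k$, so $D_1(\sigma^{p_k+1}\bar{x},\sigma^{p_k+1}\bar{y})\le 2^{-(g_k-1)}\to 0$, giving $\liminf=0$. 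Hence $(\bar{x},\bar{y})$ is a Li--Yorke pair and part 1 follows.

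Part 2 is where the real work lies, and the first thing to note is that it does \emph{not} reduce to the dense family of finite sets: unlike sensitivity or being locally eventually onto, Li--Yorke sensitivity constrains the entire orbit of a \emph{specific} partner of the given set, and a partner for a nearby finite set need not be a partner of the set itself. A second pitfall is that adjoining one cleverly chosen sequence to $A$ fails when $A$ is large (e.g. $A=\Omega$), since then $\overline{\sigma}^m A=\Omega$ swallows any single added point and no Hausdorff separation survives. The fix is to perturb \emph{every} element of $A$ at once on $\{p_k\}$: define $\rho:\Omega\to\Omega$ by $\rho(\bar{w})_i=w_i$ for $i\notin\{p_k\}$ and $\rho(\bar{w})_{p_k}=c_k$ (a constant), and set $B=\rho(A)$. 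Choosing $p_1$ large makes $D_1(\bar{w},\rho(\bar{w}))<\epsilon$ for every $\bar{w}$, so $d_H(A,B)<\epsilon$, and $B$ is compact as a continuous image of $A$.

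It remains to pick the $c_k$ and extract the two limits. The crucial feature is that every element of $B$ now carries the same symbol $c_k$ in coordinate $p_k$, so $\overline{\sigma}^{p_k}B$ lies entirely in the cylinder $[c_k]$. I choose $c_k$ so that some symbol occurring in coordinate $p_k$ of $A$ differs from $c_k$ (take $c_k\ne a_k$ when $A$ is constantly $a_k$ there, and $c_k$ arbitrary otherwise, since then at least two symbols occur). This yields a point of $\overline{\sigma}^{p_k}A$ whose coordinate $0$ differs from $c_k$, hence at distance $\ge 1$ from all of $\overline{\sigma}^{p_k}B$, so $d_H(\overline{\sigma}^{p_k}A,\overline{\sigma}^{p_k}B)\ge 1$ and $\limsup\ge 1>\tfrac12$. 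For the other limit, $\rho$ alters only coordinates in $\{p_k\}$, so at time $p_k+1$ each $\rho(\bar{w})$ agrees with $\bar{w}$ on the first $g_k$ shifted coordinates; pairing $\sigma^{p_k+1}\bar{w}$ with $\sigma^{p_k+1}\rho(\bar{w})$ gives $d_H(\overline{\sigma}^{p_k+1}A,\overline{\sigma}^{p_k+1}B)\le 2^{-(g_k-1)}\to 0$, so $\liminf=0$. I expect the single genuine obstacle to be exactly this separation step: securing a \emph{uniform} Hausdorff gap that is immune to $A$ being arbitrarily rich, which the constant-coordinate trick resolves by pushing all of $B$ off a cylinder that $A$ still meets.
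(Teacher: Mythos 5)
Your proof is correct, and part 1 is essentially the paper's own argument: the paper likewise perturbs $\bar{x}$ on the sparse set of coordinates $\{2^n : n \geq n_0\}$, getting a large distance whenever a perturbed coordinate lands at position $0$ and a small distance one step later. For part 2, though, your route is genuinely different from, and considerably simpler than, the paper's. The paper builds the partner set $K_2$ from scratch by a multi-scale construction: it covers $K_1$ by finitely many balls centred at periodic sequences $www\ldots$ with $w \in W_m$, then covers $\sigma^{L_m+1}(K_1)$ by a finer family $W_{m+1}$, and so on inductively, and defines $K_2$ as the set of all concatenations of repeated words from the $W_{m+i}$ separated by spoiler symbols $x_i$ chosen far from the corresponding coordinates of a fixed reference point $\bar{y} \in K_1$; it must then prove separately (by a diagonal subsequence argument) that $K_2$ is closed, and the liminf estimate rests on the finer and finer coverings. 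Your device --- taking $B = \rho(A)$ where $\rho$ is the continuous map overwriting the coordinates $p_k$ by constants $c_k$ --- makes both of these issues disappear: compactness of $B$ is free since $\rho$ is continuous, and $\liminf = 0$ is immediate because every element of $B$ agrees with its preimage in $A$ off the sparse set $\{p_k\}$, so the two shifted sets track each other in Hausdorff distance at times $p_k + 1$. The limsup mechanism is the same in both proofs (the entire partner set is synchronized to carry one common symbol at a designated position, which some point of the original set misses by a definite amount), and your opening observation --- that the usual reduction to the dense family of finite sets is unavailable for Li--Yorke sensitivity, so one must handle arbitrary compact $A$ --- is exactly the constraint that shapes the paper's proof as well. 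One small caveat: your recipe for choosing $c_k$ (``$c_k \neq a_k$, or arbitrary'') is tailored to the discrete metric; for a general metric alphabet with $\mathrm{diam}(\mathcal{A}) > r$ one should instead fix $\bar{y} \in A$ and pick $c_k$ with $d(c_k, y_{p_k}) \geq \frac{r}{2}$, exactly as the paper does, which replaces the constant $\frac{1}{2}$ by $\frac{r}{2(1+r)}$; this is the routine adaptation you allude to, so it is not a gap.
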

\begin{proof}

 1. We note that when $\mathcal{A}$ is a discrete alphabet set,
$(\Omega, \sigma)$ is Li-Yorke sensitive. For the sake of completion
we include the proof of the case when $(\mathcal{A}, d)$ is a metric
space.

Let $ \bar{x} =(x_0 x_1 \ldots ) \in \mathcal{A}^{\mathbb{N}}$ and
let $S_{\epsilon}(\bar{x})$ be an $\epsilon$-neighborhood of
$\bar{x}$. Let $n_0 \in \mathbb{N}$ such that $\frac{1}{2^{n_0}} <
\epsilon$. For each $n \in \mathbb{N}$, choose $x^*_{2^n} \in X$
such that $ \frac{d(x^*_{2^n}, x_{2^n})}{ 1 + d(x^*_{2^n},
x_{2^n})}
> \frac{r}{2(1+r)}$. where  $diam(\mathcal{A})
>r, \ r \in \mathbb{R}$. Replace $2^n$-th entries( i.e. $x_{2^n}$)
in the sequence $\bar{x}$ for $n \geq n_0$ by $x^*_{2^n}$(keeping
all others same) to obtain new sequence $\bar{y}$. Consequently, $
\bar{y} \in S_{\epsilon}(\bar{x})$. Further,
$D(\sigma^{2^k+1}(\bar{x}),\sigma^{2^k+1}(\bar{y})) <
\frac{1}{2^k}$ and $D(\sigma^{2^k}(\bar{x}),\sigma^{2^k}(\bar{y}))
> \frac{r}{2(1+r)}$ for infinitely many $k$. Thus, the space
$(\mathcal{A}^{\mathbb{N}}, \sigma)$ is Li-Yorke sensitive.

2. We now prove that the system
$(\mathcal{K}(\mathcal{A}^{\mathbb{N}}), \overline{\sigma})$ is
Li-Yorke sensitive. It suffices to show the same in case of the
alphabet set being equipped with a metric $d$, and
$diam(\mathcal{A}) > r, \ r \in \mathbb{R}$.

 We now prove that $(\mathcal{K}(\Omega), \overline{\sigma})$ is
Li-Yorke sensitive. Let $K_1 \in \mathcal{K}(\Omega)$ and let
$S_{\frac{1}{m}}(K_1)$ ($m > 1$) be a neighborhood of $K_1$ in the
hyperspace. Since $K_1$ is totally bounded and periodic sequences
are dense in $\Omega$, let $W_m = \{w_{1,m}, w_{2,m}, \ldots,
w_{t_m,m}\}$ be the finite set of words such that
$\frac{1}{m}$-ball around the sequences $\{ www \ldots : w \in
W_m\}$ covers $K_1$. Let each $w_{i,m}$ be of length $L^i_m$ and
let $L_m$ be the least common multiple of all such lengths.

As $\sigma^{L_m+1}(K_1)$ is compact, we construct the set $W_{m+1}$
 of finitely many words such that $\frac{1}{m+2}$-ball
around the sequences $\{ www \ldots : w \in W_{m+1}\}$ covers the
set $\sigma^{L_m+1}(K_1)$. Let $L_{m+1}$ be the least common
multiple of lengths of all words in $W_{m+1}$.

Inductively, define the set $W_{m+i}$ as the set of finitely many
words such that $\frac{1}{m+2^i}$-ball around the sequences $\{ www
\ldots : w \in W_{m+i}\}$ covers the set $\sigma^{L_m + L_{m+1}+
\ldots+ L_{m+i-1}+i}(K_1)$.

Let $\bar{y}=(y_0 y_1 \ldots y_n \ldots) \in K_1$. For each $i$,
choose $x_i \in \mathcal{A}$ such that $d(x_i, y_{L_m + L_{m+1}+
\ldots + L_{m+i}+i+1}) \geq \frac{r}{2}$.

Construct the set $K_2 = \{w_0 w_0 \ldots w_0 x_0 w_1 w_1 \ldots
w_1 x_1 \ldots : w_i \in W_{m+i} \}$, where the repetitions of
$w_i$ are made till length $L_{m+i}$ is achieved and  $w_i$ varies
over all possible entries in $W_{m+i}$.

We see that $K_2 \in \mathcal{K}(\Omega)$.

Let $\bar{z}$ be any limit point of ${K_2}$. Then, there exists a
sequence $(\bar{z}_n) \in K_2$ such that $\bar{z}_n \rightarrow
\bar{z}$. But, in any sequence $\bar{z}_n$, the first $L_m$ entries
 are repetitions of words from $W_m$, which has finite number of
choices. Hence, there exists a subsequence $(\bar{z}_{n_k})$ and a
word $w_0* \in W_m$ such that first $L_m$ entries are repetitions
of $w_0*$ for each member of the subsequence $(\bar{z}_{n_k})$.

Also, $\bar{z}_{n_k} \rightarrow \bar{z}$. And so, in $\bar{z}$,
the first $L_m$ entries   are repetitions of $w_0*$ and the ${L_m
+ 1}^{th}$ entry will be $x_0$. Again, the next $L_{m+1}$ entries
in each $\bar{z}_{n_k}$ are repetition of words from $W_{m+1}$,
which have finite number of choices. Hence, there exists a
subsequence $\bar{z}_{n_{k_r}}$ in which the next $L_{m+1}$
entries are repetitions of a word $w_1* \in W_{m+1}$.
Consequently, the next $L_{m+1}$ entries in $\bar{z}$ are
repetitions of the word $w_1*$. Inductively, we see that $\bar{z}$
is solely of the form of sequences in $K_2$, implying that
$\bar{z} \in K_2$ i.e. $K_2$ is closed.

Then, $K_2 \in S_{\frac{1}{m}}(K_1)$ with $\limsup \limits_{n
\rightarrow \infty} D_H(\overline{\sigma}^n(K_1),
\overline{\sigma}^n(K_2)) \geq \frac{r}{2(1+r)}$ and $\liminf
\limits_{n \rightarrow \infty} D_H(\overline{\sigma}^n(K_1),
\overline{\sigma}^n(K_2)) =0$. Thus, $(\mathcal{K}(\Omega),
\overline{\sigma})$ is Li-Yorke sensitive. \end{proof}

\subsection{ We now consider the case when $(\Omega, \sigma)$ is a subshift of
finite type described via the transition matrix $M$,}
\textbf{where,}

\vskip 0.5cm

\centerline{ $M = \left(%
\begin{array}{ccccc}
  m_{11} & m_{12} & . & . & m_{1n} \\
  m_{21} & m_{22} & . & . & m_{2n} \\
  . & . & . & . & . \\
  . & . & . & . & . \\
  m_{n1} & m_{n2} & . & . & m_{nn} \\
\end{array}%
\right)$}

\textbf{with each $m_{ij} = 0$ or $1$.}

Let $S_2 = \{ (i_1,i_2) : 1\leq i_1,i_2 \leq n \}$. Let $M^{*2}$ be
a matrix indexed by entries of $S_2$ defined as, $M^{*2}_{(i_1,
i_2), (j_1, j_2)} = m_{i_1 j_1} m_{i_2 j_2}$

Then $M^{*2}$ is a square matrix of order $n^2$, given as,

\centerline{{$M^{*2} = \left(%
\begin{array}{cccccccc}
  m_{11} m_{11} & . & . & m_{1n} m_{1n}| & . & . & . & m_{1n} m_{1(n-1)} \\
  m_{21} m_{21} & . & . & m_{2n} m_{2n}| & . & . & . & . \\
  . & . & . & . & . & . & . & . \\
    \underline{m_{n1} m_{n1}} & \underline{.} & \underline{.} & \underline{m_{nn} m_{nn}}| & . & . & . & . \\
    . & . & . & . & . & . & . & . \\
  . & . & . & . & . & . & . & . \\
  . & . & . & . & . & . & . & . \\
  m_{n1} m_{(n-1)1} & . & . & . & . & . & . & m_{nn} m_{(n-1)(n-1)} \\
\end{array}%
\right)$}}

When $\Omega$ comprises of $n$ elements, the transition matrix $M$
provides the dynamical behavior of the system, giving the details of
how  two given regions, labelled by  distinct alphabets, of the
system interact. In the matrix generated above, $M^{*2}_{(i_1, i_2),
(j_1, j_2)}$ gives the details of simultaneous interaction of the
$i_1-th, j_1-th$ and $i_2-th, j_2-th$ regions of the original system
respectively. As the entries vary over all possible combinations,
the matrix determines the dynamics of $(\Omega \times \Omega, \sigma
\times \sigma)$. Thus, the system $(\Omega \times \Omega, \sigma
\times \sigma)$ can be embedded into certain symbolic dynamical
system of $n^2$ symbols. This is similar to the concept of ``higher
block codes" discussed in \cite{ml}.

Similarly, let  $S_k = \{ (i_1,i_2, \ldots i_k) : 1\leq i_1,i_2,
\ldots i_k \leq n \}$. Let $M^{*k}$ be a matrix indexed by entries
of $S_k$ defined as, $M^{*k}_{(i_1, i_2, \ldots i_k), (j_1, j_2,
\ldots j_k)} = m_{i_1 j_1} m_{i_2 j_2} \ldots m_{i_k j_k}$

Then $M^{*k}$ is a square matrix of order $n^k$, given as

\centerline{{$M^{*k} = \left(%
\begin{array}{cccccccc}
  m_{11} m_{11}\ldots m_{11} & . & . & m_{1n} m_{1n} \ldots m_{1n}| & . & . & . & m_{1n} m_{1n} \ldots m_{1n} m_{1(n-1)} \\
  m_{21} m_{21}\ldots m_{21} & . & . & m_{2n} m_{2n}\ldots m_{2n}| & . & . & . & . \\
  . & . & . & . & . & . & . & . \\
    \underline{m_{n1} m_{n1}\ldots m_{n1}} & \underline{.} & \underline{.} & \underline{m_{nn} m_{nn} \ldots m_{nn}}| & . & . & . & . \\
    . & . & . & . & . & . & . & . \\
  . & . & . & . & . & . & . & . \\
  . & . & . & . & . & . & . & . \\
  m_{n1} m_{n1} \ldots m_{n1} m_{(n-1)1} & . & . & . & . & . & . & m_{nn} m_{nn} \ldots m_{nn} m_{(n-1)(n-1)} \\
\end{array}%
\right)$}}

Arguing as before, the matrix $M^{*k}$ determines the simultaneous
interaction of a set of $k$ regions of the original space, with
another set of $k$ regions of the same space. The matrix $M^{*k}$
hence determines the dynamics of $(\Omega \times \Omega \times
\ldots \times \Omega, \sigma \times \sigma \times \ldots \times
\sigma)$, where the cartesian product is taken $k$ number of
times.

It can be observed that the first $n \times n$ block in $M^{*k}$
is just $M$.

The Vietoris topology is generated by sets of the form $<U_1,U_2,
\ldots, U_n>$. Hence, while studying the hyperspace under Vietoris
topology, it is sufficient to study the simultaneous behavior of
finitely many regions of the original space. Consequently, the
behavior of the matrices $M^{*k}$ is sufficient to study the
dynamics of the hyperspace $(\mathcal{K}(\Omega),
\overline{\sigma})$.

For a dynamical system $(X,f)$, $f$ is weakly mixing if and only
if for every $k \geq 2$, $\underbrace{f \times f \times \ldots
\times f}_{k \text{ } times}$ is transitive \cite{th}. Thus,
$\sigma$ is weakly mixing if and only if  $\underbrace{\sigma
\times \sigma \times \ldots \times \sigma}_{ k \text{ } times}$ is
transitive for each $ k \geq 2$, and by the discussion above, if
and only if $M^{*k}$ is irreducible for $ k \geq 2$.

This proves that on a subshift of finite type, described via a
transition matrix $M$, $\sigma$ is weakly mixing if and only if
for every $k \geq 2$, $M^{*k}$ is irreducible. And also
establishes that the following are equivalent :

1.  $(\Omega,\sigma)$ is weakly mixing

2. $(\mathcal{K}(\Omega), \overline{\sigma})$ is weakly mixing

3.  $(\mathcal{K}(\Omega),\overline{\sigma})$ is transitive

Similarly, the following are equivalent :

1.  $(\Omega,\sigma)$ is topologically mixing

2. $(\mathcal{K}(\Omega), \overline{\sigma})$ is topologically
mixing

Also $M$ is irreducible and aperiodic if and only if  $M^{*k}$ is
irreducible and aperiodic for all $k \in \mathbb{N}$, establishing
that $(\Omega,\sigma)$ is topologically mixing if and only if
$(\mathcal{K}(\Omega), \overline{\sigma})$ is topologically
mixing.

Transitivity of the induced system $(\mathcal{K}(\Omega),
\overline{\sigma})$ is equivalent to weak mixing of the original
system $(\Omega,\sigma)$. But, if the system $(\Omega,\sigma)$ is
transitive, the induced system $(\mathcal{K}(\Omega),
\overline{\sigma})$ may fail to be transitive. This can be
illustrated by a simple counterexample :

\begin{ex}
Let $\Sigma_M$ be the subshift of finite type given by the matrix

\centerline{$M = \left(%
\begin{array}{cccc}
  0 & 0 & 1 & 1 \\
  0 & 0 & 1 & 1 \\
  1 & 1 & 0 & 0 \\
  1 & 1 & 0 & 0 \\
\end{array}%
\right)$}

Then, $M$ is irreducible and hence generates a transitive subshift.
However $M^{*k}$ is not irreducible for $k \geq 2$. Hence, the
induced map is not transitive.
\end{ex}

\begin{Remark}
The above results involving transitivity, weakly mixing and mixing
are known in the case of any general dynamical system $(X,f)$. See
\cite{ba,ro,sn1}
\end{Remark}

Whereas for sensitivity we can observe that
\begin{Proposition}
$(\Omega,\sigma)$ is sensitive if and only if
$(\mathcal{K}(\Omega), \overline{\sigma})$ is sensitive.
\end{Proposition}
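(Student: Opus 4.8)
The plan is to reduce both implications to a single structural dichotomy for the base subshift, namely whether $\Omega$ has an isolated point. Recall from \cite{subru} that a point of a subshift is a point of sensitivity for $\sigma$ exactly when it is not isolated, and that the witnessing separation may be taken to be the uniform constant $\frac{1}{2}$ (if $\bar y$ agrees with $\bar x$ on a long prefix but differs at some coordinate $m$, then $D(\sigma^m\bar x,\sigma^m\bar y)\ge 1$). Hence $(\Omega,\sigma)$ is sensitive precisely when $\Omega$ is perfect. I would therefore prove the equivalence by showing that $(\mathcal{K}(\Omega),\overline\sigma)$ is sensitive if and only if $\Omega$ is perfect. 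The only general fact needed on the hyperspace side is the elementary observation, valid in any system, that an isolated point can never be a point of sensitivity: if $z$ is isolated then for small $\epsilon$ the only $y$ with $d(z,y)<\epsilon$ is $y=z$, and $d(f^n z,f^n y)=0$.

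First I would dispose of the easy direction. Suppose $\Omega$ has an isolated point $\bar x$, say $S(\bar x,\rho)\cap\Omega=\{\bar x\}$. Then the singleton $\{\bar x\}$ is isolated in $\mathcal{K}(\Omega)$: any $K$ with $d_H(K,\{\bar x\})<\rho$ satisfies $K\subseteq S_\rho(\bar x)\cap\Omega=\{\bar x\}$, whence $K=\{\bar x\}$. By the observation above $\{\bar x\}$ is not a point of sensitivity, so $\overline\sigma$ is not sensitive. Contrapositively, $\overline\sigma$ sensitive forces $\Omega$ perfect, hence $\sigma$ sensitive; this already yields one direction of the Proposition.

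For the converse assume $\Omega$ is perfect and fix $K\in\mathcal{K}(\Omega)$ and $\epsilon>0$; choose $N$ with $\frac{1}{2^N}<\epsilon$. The naive attempt---pick $\bar a\in K$, use perfectness to find $\bar b$ with $D(\bar a,\bar b)<\epsilon$ first differing at some $m>N$, and set $K'=K\cup\{\bar b\}$---need not work, because although $\sigma^m\bar b$ differs from $\sigma^m\bar a$ at coordinate $0$, it may lie close to some \emph{other} point of $\sigma^m(K)$, leaving $d_H(\overline\sigma^m K,\overline\sigma^m K')$ small. The robust construction instead forces one image set to omit an entire cylinder. Using that $\Omega$ is perfect (equivalently infinite) there is an essential state admitting two distinct continuations, which I would exploit to fix a short word $u$ with $[u]\cap\Omega\neq\emptyset$ and a far window so that, after rerouting the points of $K$ out of the cylinder $[\,\underbrace{\ast\cdots\ast}_{n}u\,]$ along this branch, the resulting $K'$ satisfies $K'\cap\sigma^{-n}[u]=\emptyset$. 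Then $\overline\sigma^n(K')=\sigma^n(K')$ omits $[u]$, so any point of $\Omega$ lying in $[u]$ witnesses $d_H(\overline\sigma^n K,\overline\sigma^n K')\ge \frac{1}{2^{|u|-1}}$, giving a uniform sensitivity constant for $\overline\sigma$.

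The crux, and the step I expect to be the main obstacle, is keeping $d_H(K,K')$ below $\epsilon$ while performing this rerouting uniformly over all of $K$. Each point $\bar a\in K$ lying in the cylinder must be replaced by a nearby admissible sequence that leaves it, and the cost of the replacement is governed by how far back in the orbit the transition graph first offers an alternative edge; a locally forced (out-degree-one) stretch can push this branch point toward coordinate $0$ and inflate the perturbation. Resolving this requires a quantitative use of the transition matrix: perfectness guarantees that along every admissible sequence a branching state recurs, and by placing the window $n$ well beyond the last uncontrollable branch---tracking finite configurations through the matrices $M^{\ast k}$ that, as noted above, govern the simultaneous behaviour of finitely many regions---one bounds every replacement by $\frac{1}{2^{n_0}}$ for a controllable $n_0$. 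Verifying that this can be carried out simultaneously and uniformly over the (finitely many prefix-classes of the) compact set $K$, including the extreme case $K=\Omega$ where one genuinely must thin the set rather than enlarge it, is the technical heart of the argument.
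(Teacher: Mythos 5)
Your reduction of the backward implication to isolated points is correct and genuinely different from the paper's argument: the paper extracts a witness $\bar{y}\in A$ directly from a Hausdorff separation $D_H(\overline{\sigma}^n(\{\bar{x}\}),\overline{\sigma}^n(A))>\delta$, whereas you observe that sensitivity of $\overline{\sigma}$ forces $\mathcal{K}(\Omega)$, and hence $\Omega$, to have no isolated points, and then invoke the characterization from \cite{subru} that a subshift is sensitive exactly when it is perfect. That half of your proposal stands, and is arguably cleaner than the paper's.

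The forward implication, however, has a genuine gap, and it is the one you flag yourself. Your plan needs two ingredients that are never established. First, rerouting points of $K$ out of $\sigma^{-n}[u]$ requires a \emph{uniform branching bound}: after every admissible word of a perfect subshift of finite type there must exist two admissible continuations differing within a bounded number of steps, the bound being independent of the word. This is exactly where the finite-type hypothesis must enter (it follows because there are only finitely many words of length $M-1$, none of which can have a deterministic future when $\Omega$ is perfect); it is the analogue of the paper's claim that branching occurs within $M$ steps of any cylinder, and you only gesture at it (``a branching state recurs''). Second, even granting that bound, your construction must (i) ensure $K$ actually meets $\sigma^{-n}[u]$ (otherwise $K'=K$ and there is no separation), and (ii) reroute the possibly infinitely many points of an arbitrary compact $K$ simultaneously while keeping $K'$ compact, admissible, and within $\epsilon$ of $K$ in the Hausdorff metric; none of this is carried out, and you yourself call it the unresolved ``technical heart.'' Moreover, the difficulty is self-inflicted: since finite sets are dense in $\mathcal{K}(\Omega)$, it suffices to prove sensitivity with a uniform constant at finite sets, which is what the paper does. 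There the set-level separation is obtained not by cylinder omission but by an anchor device: for each $\bar{x}^i$ take a branch $\bar{y}^i$ in its cylinder with $D(\sigma^n(\bar{x}^i),\sigma^n(\bar{y}^i))\geq \frac{1}{2^{M}}$, and put into $C$ either $\bar{y}^i$ or $\bar{x}^i$ according to whether $D(\sigma^n(\bar{x}^1),\sigma^n(\bar{y}^i))\geq\frac{1}{2^{M+1}}$; the triangle inequality then makes every point of $\overline{\sigma}^n(C)$ lie at distance at least $\frac{1}{2^{M+1}}$ from $\sigma^n(\bar{x}^1)\in\overline{\sigma}^n(A)$, so $D_H(\overline{\sigma}^n(A),\overline{\sigma}^n(C))\geq\frac{1}{2^{M+1}}$. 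That one-line selection resolves precisely the ``close to some other point of $\sigma^m(K)$'' obstruction you correctly identify, and it is the idea missing from your sketch.
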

\begin{proof}
Let $\Omega$ be a subshift of finite type and let  $(\Omega,
\sigma)$ be sensitive. We now show that $(\mathcal{K}(\Omega),
\overline{\sigma})$ is sensitive. In order to prove this, we show
that the induced map, $\overline{\sigma}$, is sensitive on all
finite subsets of $\Omega$, with a uniform sensitivity constant.
Without loss of generality, we  assume that all forbidden words for
$\Omega$ have the same length, say $M$.

Let $A = \{ \bar{x}^1,\bar{x}^2,\ldots,\bar{x}^r\}$ be a finite
subset of $\Omega$ and let $[x^i_0 x^i_1 x^i_2 \ldots x^i_n]$ be a
neighborhood of $\bar{x}^i$, $i=1,2,\ldots,r$. We show that
$\overline{\sigma}$ is sensitive on $A$ with sensitivity constant
$\frac{1}{2^{M+1}}$.

For each $i$, in the sequence $\bar{x}^i \in [x^i_0 x^i_1 x^i_2
\ldots x^i_n]$, we see the possible number of options for the next
entry $x^i_{n+1}$. If more than one option is available, we find a
sequence $\bar{y}^i$ with this entry replaced, or otherwise move
to the next entry. We need to continue this process for only the
next $M$ entries, since at least one of the entries $x^i_{n+l}$
and $y^i_{n+l}$ will be different for $l=1,2,\ldots,M$. Otherwise,
if each entry $x^i_{n+1},x^i_{n+2},\ldots, x^i_{n+M}$ has a unique
option, it means that there is only one allowed word of length
$M$. This will contradict the sensitivity of $\sigma$. Thus, we
find sequences $\bar{y}^1,\bar{y}^2,\ldots,\bar{y}^r$ such that
for each $i$, at least one of the entries $x^i_{n+l}$ and
$y^i_{n+l}$ are different for $l=1,2,\ldots,M$.

Therefore, for each $i$, $D(\sigma^n(\bar{x}^i),
\sigma^n(\bar{y}^i)) \geq \frac{1}{2^M}$.

Construct the set $C :=\{\bar{z}^1,\bar{z}^2,\ldots,\bar{z}^r\}$ as,

\centerline{$\bar{z}^i = \left\{%
\begin{array}{ll}
    \bar{y}^i, & \hbox{$D(\sigma^n(\bar{x}^1), \sigma^n(\bar{y}^i)) \geq \frac{1}{2^{M+1}}$;} \\
    \bar{x}^i, & \hbox{$\mbox{otherwise}$.} \\
\end{array}%
\right.$}

Therefore, $\sigma^n(\bar{x}^1)$ is at least $\frac{1}{2^{M+1}}$
apart from $\sigma^n(\bar{z}^i)$, $i=1,2,\ldots,r$. Thus,
$D_H(\overline{\sigma}^n(A), \overline{\sigma}^n(C)) \geq
\frac{1}{2^{M+1}}$, and hence $(\mathcal{K}(\Omega),
\overline{\sigma})$ is sensitive.

Conversely, let $(\mathcal{K}(\Omega), \overline{\sigma})$ be
sensitive with sensitivity constant $\delta$. For $\bar{x} \in
\Omega$ and for $\epsilon >0$,  let $S_{\epsilon}(\{\bar{x}\})$ be
an $\epsilon$-neighborhood of $\{\bar{x}\} \in
\mathcal{K}(\Omega)$. Then, there exists $A \in
S_{\epsilon}(\{\bar{x}\})$ and $n \in \mathbb{N}$ such that
$D_H(\overline{\sigma}^n(\{\bar{x}\}), \overline{\sigma}^n(A))
> \delta$. Thus, there exists a sequence $\bar{y} \in A \subset S_{\epsilon}(\bar{x})$ such
that $D_H(\overline{\sigma}^n(\{\bar{x}\}),
\overline{\sigma}^n(\{\bar{y}\})) > \delta$. Consequently, we get
$D(\sigma^n(\bar{x}),\sigma^n(\bar{y}))
> \delta$ establishing the sensitivity of $\sigma$.
\end{proof}

\begin{Remark}
It is known that when the system $(X,f)$ is cofinitely sensitive,
then so is $(\mathcal{K}(\Omega), \overline{\sigma})$ and vice-versa
\cite{sn2}. And since sensitive subshifts of finite type are
cofinitely sensitive, the above result should hold as a special case
of the general result. We however prove this in terms of sequences,
without referring to the cofinite sensitivity of the space.
\end{Remark}

\subsection{ We now consider the case when $(\Omega, \sigma)$ is
any subshift of $(A^{\mathbb{N}}, \sigma)$,} \textbf{where
$\mathcal{A}$ can be any alphabet set.}

The system $(\Omega, \sigma)$ induces the subsystem
$(\mathcal{K}(\Omega), \overline{\sigma})$.

\begin{Proposition}
If $(\Omega, \sigma)$ has dense set of periodic points, then so does
$(\mathcal{K}(\Omega), \overline{\sigma})$.
\end{Proposition}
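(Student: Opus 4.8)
The plan is to follow the full-shift argument for dense periodic points, but to replace its explicit ``periodic repetition'' construction — which need not produce a legal sequence once $\Omega$ is a constrained subshift — by a direct appeal to the density hypothesis on $(\Omega,\sigma)$. First I would reduce the problem to approximating finite subsets. Since the finite subsets of $\Omega$ are dense in $\mathcal{K}(\Omega)$, it is enough to show that every neighborhood of a finite set contains a periodic point of $\overline{\sigma}$. So let $\mathcal{U}\subset\mathcal{K}(\Omega)$ be open, pick a finite set $A=\{\bar{x}^1,\dots,\bar{x}^s\}\in\mathcal{U}$ with each $\bar{x}^j\in\Omega$, and choose $r\in\mathbb{N}$ with $S(A,\tfrac{1}{2^r})\subset\mathcal{U}$.

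Next, for each $j$ I would use the hypothesis that the $\sigma$-periodic points are dense in $\Omega$ to select a $\sigma$-periodic sequence $\bar{p}^j\in\Omega$, of some period $n_j$, with $D(\bar{x}^j,\bar{p}^j)<\tfrac{1}{2^r}$. Putting $N=\mathrm{lcm}(n_1,\dots,n_s)$ gives $\sigma^N(\bar{p}^j)=\bar{p}^j$ for every $j$, so the finite set $P=\{\bar{p}^1,\dots,\bar{p}^s\}$ satisfies $\overline{\sigma}^N(P)=P$; that is, $P$ is a periodic point of $\overline{\sigma}$ of period dividing $N$. Finally I would check that $P\in\mathcal{U}$: since each $\bar{p}^j$ lies within $\tfrac{1}{2^r}$ of the correspondingly indexed $\bar{x}^j$, both one-sided Hausdorff inclusions hold at scale $\max_j D(\bar{x}^j,\bar{p}^j)<\tfrac{1}{2^r}$, whence $d_H(A,P)<\tfrac{1}{2^r}$ and $P\in S(A,\tfrac{1}{2^r})\subset\mathcal{U}$.

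The step I expect to be the genuine point is the one just glossed over. In the full-shift proof the periodic approximant is built by repeating the initial block $x^j_0\cdots x^j_r$, but in a subshift that block need not be admissible as a period: the self-concatenation can contain a forbidden word, so the hand-made construction can leave $\Omega$. The entire role of the density hypothesis is to supply \emph{legal} periodic approximants inside $\Omega$; once they are available, the least-common-multiple trick and the Hausdorff estimate are routine. The only bookkeeping point is that passing from period $n_j$ to the common period $N$ cannot cost admissibility, since periodicity under $\sigma^{n_j}$ trivially forces periodicity under $\sigma^{N}$ for the same sequence.
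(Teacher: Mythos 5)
Your proposal is correct and follows essentially the same route as the paper: reduce to a finite set $\{\bar{x}^1,\dots,\bar{x}^s\}$ via density of finite sets in $\mathcal{K}(\Omega)$, use the hypothesis to pick a legal periodic approximant $\bar{p}^j\in\Omega$ near each $\bar{x}^j$, take the least common multiple of the periods to get a common period for the finite set, and conclude with the Hausdorff estimate $d_H(A,P)\le\max_j D(\bar{x}^j,\bar{p}^j)$. Your closing remark about why the full-shift block-repetition trick must be abandoned (self-concatenation may create forbidden words) correctly identifies the role of the density hypothesis, which the paper uses in exactly the same way.
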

\begin{proof}
Let $K \in \mathcal{K}(\Omega)$ and let $S_{D_H}(K, \epsilon)$ be a
neighborhood of $K$ in the hyperspace. As finite set of points in
$\Omega$ are dense in $\mathcal{K}(\Omega)$, there exists such a
finite set $ \{\bar{x}^1, \bar{x}^2, \ldots, \bar{x}^n \} \in
S_{D_H}(K, \epsilon)$, where each $\bar{x}^i=x^i_0 x^i_1 x^i_2
\ldots$. Further, there exists $\eta >0$ such that
$S_{D_H}(\{\bar{x}^1, \bar{x}^2, \ldots, \bar{x}^n \}, \eta) \subset
S_{D_H}(K, \epsilon)$. As periodic points are dense in $(\Omega,
\sigma)$, $\bar{y}^i = x^i_0 x^i_1 \ldots x^i_n y_{n+1} y_{n+2}
\ldots y_{n_i} x^i_0 x^i_1 \ldots x^i_n y_{n+1} y_{n+2} \ldots
y_{n_i} \ldots$ is a periodic point of period $k_i$ in $S(x_i,
\eta)$. Let $K = lcm \{ k_1, k_2, \ldots k_n \}$. Then, $\{
\bar{y}^1,\bar{y}^2, \ldots \bar{y}^n \}$ is a periodic point of
$\overline{\sigma}$ with period $K$, contained in
$S_{D_H}(\{\bar{x}^1, \bar{x}^2, \ldots, \bar{x}^n \}, \eta)$. Thus,
periodic points are dense in $(\mathcal{K}(\Omega),
\overline{\sigma})$.
\end{proof}

The result proved above is a manifestation of the known result in
the general case. It is known that the system $(\Omega, \sigma)$ can
be seen as a subsystem of $(\mathcal{K}(\Omega),
\overline{\sigma})$. And by expanding the symbol set (of the
original system) to an appropriate cardinality the system
$(\mathcal{K}(\Omega), \overline{\sigma})$ can be embedded into the
symbolic space of these new symbols. Then if $(\mathcal{K}(\Omega),
\overline{\sigma})$ is transitive, under successive iterations, any
two regions of the space labelled by distinct symbols interact. The
same holds for the original set of symbols. Consequently, any two
regions of the original dynamical system $(\Omega, \sigma)$ interact
under iterations of the map $\sigma$ making the $(\Omega, \sigma)$
transitive.

Similarly, we observe that

\begin{Proposition}
$(\Omega, \sigma)$ is weakly mixing if and only if
$(\mathcal{K}(\Omega), \overline{\sigma})$ is weakly mixing.
\end{Proposition}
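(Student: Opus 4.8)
The plan is to prove both implications by working directly with the Vietoris basic open sets $\langle U_1,\ldots,U_a\rangle$ that generate the topology on $\mathcal{K}(\Omega)$, and to reduce everything to the characterization (from \cite{th}, already invoked in the previous subsection) that $\sigma$ is weakly mixing if and only if $\underbrace{\sigma\times\cdots\times\sigma}_{s\text{ times}}$ is transitive for every $s\geq 2$; the analogous statement holds for $\overline{\sigma}$. The central tool I would establish first is a purely combinatorial criterion describing when an iterate of $\overline{\sigma}$ carries one Vietoris basic set into another. Concretely, I would show that $\overline{\sigma}^{\,n}(\langle U_1,\ldots,U_a\rangle)\cap\langle V_1,\ldots,V_b\rangle\neq\emptyset$ holds if and only if $\sigma^n(U_i)\cap\bigl(\bigcup_{j}V_j\bigr)\neq\emptyset$ for every $i$ and $\sigma^n\bigl(\bigcup_i U_i\bigr)\cap V_j\neq\emptyset$ for every $j$. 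The nontrivial direction is witnessed by a finite set: for each $i$ choose $p_i\in U_i$ with $\sigma^n(p_i)\in\bigcup_j V_j$, for each $j$ choose $q_j\in\bigcup_i U_i$ with $\sigma^n(q_j)\in V_j$, and set $A=\{p_1,\ldots,p_a,q_1,\ldots,q_b\}$. Then $A$ is a finite (hence compact) subset of $\Omega$ lying in $\langle U_1,\ldots,U_a\rangle$ with $\sigma^n(A)\in\langle V_1,\ldots,V_b\rangle$, which is exactly the membership we need.

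For the forward implication, assume $(\Omega,\sigma)$ is weakly mixing and take four Vietoris basic open sets $\langle U_1,\ldots,U_a\rangle$, $\langle U'_1,\ldots,U'_{a'}\rangle$, $\langle V_1,\ldots,V_b\rangle$, $\langle V'_1,\ldots,V'_{b'}\rangle$ in $\mathcal{K}(\Omega)$. By the criterion above, transitivity of $\overline{\sigma}\times\overline{\sigma}$ on this pair of product neighbourhoods amounts to finding a \emph{single} $n$ satisfying finitely many conditions, each of the shape $\sigma^n(P_t)\cap Q_t\neq\emptyset$, where $P_t,Q_t$ range over the nonempty open sets $U_i,\ \bigcup_i U_i,\ U'_i,\ \bigcup_i U'_i$ and $V_j,\ \bigcup_j V_j,\ V'_j,\ \bigcup_j V'_j$. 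Listing these conditions as pairs $(P_1,Q_1),\ldots,(P_s,Q_s)$ and applying transitivity of $\sigma^{(s)}$ to the product open sets $P_1\times\cdots\times P_s$ and $Q_1\times\cdots\times Q_s$ produces exactly such a common $n$. Hence $\overline{\sigma}\times\overline{\sigma}$ is transitive and $(\mathcal{K}(\Omega),\overline{\sigma})$ is weakly mixing.

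For the converse, assume $(\mathcal{K}(\Omega),\overline{\sigma})$ is weakly mixing and take nonempty open sets $U_1,U_2,V_1,V_2\subseteq\Omega$. I would apply weak mixing of $\overline{\sigma}$ to the singleton-type neighbourhoods $\langle U_1\rangle,\langle U_2\rangle$ and $\langle V_1\rangle,\langle V_2\rangle$, where $\langle U\rangle=\{K\in\mathcal{K}(\Omega):\emptyset\neq K\subseteq U\}$, to obtain a common $n$ with $\overline{\sigma}^{\,n}(\langle U_i\rangle)\cap\langle V_i\rangle\neq\emptyset$ for $i=1,2$. Unravelling each of these gives a compact $A_i\subseteq U_i$ with $\sigma^n(A_i)\subseteq V_i$, so in particular $\sigma^n(U_i)\cap V_i\neq\emptyset$ for $i=1,2$ with the \emph{same} $n$; this is precisely transitivity of $\sigma\times\sigma$ on $U_1\times U_2$ and $V_1\times V_2$, so $(\Omega,\sigma)$ is weakly mixing.

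The main obstacle, and the step deserving the most care, is securing a \emph{single} $n$ that simultaneously fulfils all the finitely many intersection conditions in the forward direction: mere transitivity of $\sigma$ would only yield one condition at a time (and indeed the counterexample in Example~3.? shows transitivity alone fails), whereas weak mixing — in the equivalent guise of transitivity of every finite product $\sigma^{(s)}$ — is exactly what delivers the common $n$. The remaining points are routine: verifying that the finite witness $A$ genuinely belongs to $\mathcal{K}(\Omega)$ (it does, being finite, with $\sigma^n(A)\subseteq\Omega$ by $\sigma$-invariance of $\Omega$) and that it sits in the prescribed Vietoris neighbourhoods, which follows immediately from the way its points were chosen.
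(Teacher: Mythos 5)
Your proof is correct, and it reaches the result by a genuinely different route than the paper's, although both arguments ultimately run on the same engine: the characterization from \cite{th} (quoted in the paper just before this proposition's section) that $\sigma$ is weakly mixing if and only if every finite product $\sigma\times\cdots\times\sigma$ is transitive, together with finite sets acting as witnesses in $\mathcal{K}(\Omega)$. The difference lies in the packaging. You work purely topologically with the Vietoris basis and isolate an exact criterion --- $\overline{\sigma}^{\,n}(\langle U_1,\ldots,U_a\rangle)\cap\langle V_1,\ldots,V_b\rangle\neq\emptyset$ if and only if $\sigma^n(U_i)\cap\bigcup_j V_j\neq\emptyset$ for every $i$ and $\sigma^n\bigl(\bigcup_i U_i\bigr)\cap V_j\neq\emptyset$ for every $j$ --- witnessed by a finite set split into covering points $p_i$ and hitting points $q_j$; both implications of the proposition then fall out of this single lemma, the converse cleanly via the neighbourhoods $\langle U\rangle$. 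The paper instead stays metric and sequence-theoretic, in keeping with its stated theme: it approximates the compact sets $A_i$ by finite sets of sequences (density of finite sets in the Hausdorff metric), builds the witnesses $\bar{b}^{i_j}$ explicitly as sequences sharing a long prefix with $\bar{a}^{i_j}$ and landing $\eta$-close to $\bar{a}^{(i+2)_j}$ at a common time $n$ (the common $n$ being exactly where finite-product transitivity enters, implicitly, with a point-to-point correspondence between source and target finite sets), and leaves the converse as ``deduced similarly.'' What each approach buys: yours never uses that $\Omega$ is a shift space, so it is essentially the general hyperspace argument that the paper's own Remark attributes to \cite{ba,ro,sn1}, and it makes the converse fully explicit; the paper's proof is less portable but exhibits the witnesses concretely as symbol sequences, which is the article's declared purpose. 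One point you leave implicit and should state: a nonempty basic set $\langle U_1,\ldots,U_a\rangle$ automatically has every $U_i$ nonempty, which is needed before applying transitivity of the $s$-fold product $\sigma\times\cdots\times\sigma$ to $P_1\times\cdots\times P_s$ and $Q_1\times\cdots\times Q_s$.
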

\begin{proof}
Let $(\Omega, \sigma)$ be weakly mixing. Let $S_{D_H}(A_1,
\epsilon)$, $S_{D_H}(A_2, \epsilon)$ and $S_{D_H}(A_3, \epsilon)$,
$S_{D_H}(A_4, \epsilon)$ be two pairs of open sets in the
hyperspace. As finite sets are dense in the hyperspace, let
$\{\bar{a}^{i_1},\bar{a}^{i_2}, \ldots, \bar{a}^{i_k} \} \in
S_{D_H}(A_i, \epsilon)$, $1 \leq i \leq 4$. Further, there exists
$\eta >0$ such that $S_{D_H}(\{\bar{a}^{i_1}, \bar{a}^{i_2}, \ldots,
\bar{a}^{i_k} \}, \eta) \subset S_{D_H}(A_i, \epsilon)$,  $1 \leq i
\leq 4$. As $\sigma$ is weakly mixing, there exists $\bar{b}^{i_j} =
a^{i_j}_0 a^{i_j}_1 \ldots a^{i_j}_k b^{i_j}_{k+1} b^{i_j}_{k+2}
\ldots$, \ $\bar{b}^{i_j} \in S(\bar{a}^{i_j}, \eta)$, $i=1,2$,
$j=1,2, \ldots k$ and $n \in \mathbb{N}$ such that
$\sigma^n(\bar{b}^{i_j}) \in S(\bar{a}^{{(i+2)}_j}, \eta)$. Thus,
$B_i = \{\bar{b}^{i_1},\bar{b}^{i_2}, \ldots, \bar{b}^{i_k} \} \in
S_{D_H}(A_i, \epsilon)$ such that $\overline{\sigma}^n(B_i) \in
S_{D_H}(A_{i+2}, \epsilon)$ for $i=1,2$. Thus,
$(\mathcal{K}(\Omega), \overline{\sigma})$ is weakly mixing.

The converse can be deduced similarly as discussed above.
\end{proof}

\begin{Proposition}
$(\Omega, \sigma)$ is topological mixing if and only if
$(\mathcal{K}(\Omega), \overline{\sigma})$ is topological mixing.
\end{Proposition}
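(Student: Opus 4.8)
The plan is to prove the two implications separately, with the forward direction (mixing of $\sigma$ implies mixing of $\overline{\sigma}$) carrying essentially all the content. Since the basic open sets of the Vietoris topology on $\mathcal{K}(\Omega)$ are of the form $\langle U_1, U_2, \ldots, U_p \rangle$, it suffices to verify the mixing condition for two such basic sets $\mathcal{U} = \langle U_1, \ldots, U_p \rangle$ and $\mathcal{V} = \langle V_1, \ldots, V_q \rangle$, where each $U_i$ and $V_j$ may be taken to be a nonempty cylinder in $\Omega$. As in the preceding propositions, I would work with finite subsets of $\Omega$, which are dense in $\mathcal{K}(\Omega)$ and whose images under $\overline{\sigma}^n$ are completely determined by finitely many points.

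For the forward direction, assume $(\Omega,\sigma)$ is topologically mixing. For each pair $(U_i, V_1)$ and each pair $(U_1, V_j)$ --- finitely many pairs in all --- mixing of $\sigma$ supplies a threshold beyond which $\sigma^n$ of the first set meets the second; let $k$ be the maximum of these finitely many thresholds. Then, for every fixed $n \geq k$, I would choose points $x^i_n \in U_i$ with $\sigma^n(x^i_n) \in V_1$ for $1 \leq i \leq p$, and points $z^j_n \in U_1$ with $\sigma^n(z^j_n) \in V_j$ for $1 \leq j \leq q$, and set $K_n = \{x^1_n, \ldots, x^p_n, z^1_n, \ldots, z^q_n\}$. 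This finite set lies in $\mathcal{U}$: it meets each $U_i$ (via $x^i_n$) and is contained in $\bigcup_i U_i$ (each $z^j_n$ lying in $U_1$). Its image $\overline{\sigma}^n(K_n) = \sigma^n(K_n)$ meets each $V_j$ (via $\sigma^n(z^j_n)$) and is contained in $\bigcup_j V_j$, since every listed point is sent into some $V_j$ (the $x^i_n$ into $V_1$, the $z^j_n$ into $V_j$); hence $\overline{\sigma}^n(K_n) \in \mathcal{V}$. As this holds for all $n \geq k$, we obtain $\overline{\sigma}^n(\mathcal{U}) \cap \mathcal{V} \neq \emptyset$ for all $n \geq k$, which is exactly topological mixing of $\overline{\sigma}$.

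The reverse direction is immediate from the embedding of $(\Omega, \sigma)$ into $(\mathcal{K}(\Omega), \overline{\sigma})$ as singletons. Given nonempty open $U, V \subseteq \Omega$, apply mixing of $\overline{\sigma}$ to the basic sets $\langle U \rangle$ and $\langle V \rangle$ to obtain $k$ with $\overline{\sigma}^n\langle U \rangle \cap \langle V \rangle \neq \emptyset$ for all $n \geq k$; any $K$ witnessing this satisfies $K \subseteq U$ and $\sigma^n(K) \subseteq V$, so any point $x \in K$ gives $x \in U$ and $\sigma^n(x) \in V$, whence $\sigma^n(U) \cap V \neq \emptyset$. The step I expect to require the most care is the containment $\sigma^n(K_n) \subseteq \bigcup_j V_j$ in the forward direction: this is the ``upper'' Vietoris condition that distinguishes the hyperspace problem from mere transitivity, and it is precisely what forces me to route every point of $K_n$ into the union by connecting each $U_i$ to the single target $V_1$, rather than merely arranging that each $V_j$ is hit.
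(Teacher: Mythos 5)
Your proof is correct, and at the top level it follows the same strategy as the paper: reduce to a dense family of finite sets, apply mixing of the base system to finitely many pairs of open sets in $\Omega$, and take the maximum of the resulting thresholds. The difference lies in how the connecting finite set is routed. The paper works with Hausdorff-metric balls around two finite approximations which it writes with the \emph{same} number of points, $\{\bar{a}^{1_1},\ldots,\bar{a}^{1_k}\}$ and $\{\bar{a}^{2_1},\ldots,\bar{a}^{2_k}\}$, and uses mixing to match them bijectively: the point chosen near $\bar{a}^{1_j}$ is sent by $\sigma^n$ near $\bar{a}^{2_j}$, for each $j$. You instead work with Vietoris basic sets $\langle U_1,\ldots,U_p\rangle$ and $\langle V_1,\ldots,V_q\rangle$ and use a hub-and-spoke routing (every $U_i$ into $V_1$, and $U_1$ into every $V_j$). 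The two are equivalent in substance, since a Hausdorff ball around a finite set is exactly a Vietoris neighborhood built from metric balls, but your routing has a small advantage: it works directly when the two sides have different numbers of sets ($p\neq q$), whereas the paper's bijective pairing silently assumes equal cardinalities of the two approximating sets (harmless, since one can list a finite set with repetitions, but unremarked there); your device is also the one used in Banks' general hyperspace argument. Finally, your converse via singletons and the basic sets $\langle U\rangle$, $\langle V\rangle$ is written out in full, whereas the paper only asserts that the converse ``can be deduced similarly''; your version is the cleaner and more complete of the two.
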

\begin{proof}
Let $(\Omega, \sigma)$ be topologically mixing. Let $S_{D_H}(A_1,
\epsilon)$, $S_{D_H}(A_2, \epsilon)$ be non empty open sets in the
hyperspace. As finite sets are dense in $\mathcal{K}(\Omega)$, let
$\{\bar{a}^{i_1},\bar{a}^{i_2}, \ldots, \bar{a}^{i_k} \} \in
S_{D_H}(A_i, \epsilon)$, $i=1,2$. Further, there exists $\eta >0$
such that $S_{D_H}(\{\bar{a}^{i_1}, \bar{a}^{i_2}, \ldots,
\bar{a}^{i_k} \}, \eta) \subset S_{D_H}(A_i, \epsilon)$. As $\sigma$
is topological mixing, for each $j$, there exists $n_{1j} \in
\mathbb{N}$ such that for any $n \geq n_{1j}$, there exists
$\bar{b}^{1_j}_n = ({a^{1_j}_0} {a^{1_j}_1}{ a^{1_j}_2} \ldots
{a^{1_j}_{k_n}} {b^{1_j}_{k_n +1}} \ldots ) \in  S(\bar{a}^{1_j},
\eta)$ such that $\sigma^n(\bar{b}^{1_j}_n) \in S(\bar{a}^{2_j},
\eta)$. Let $r = \max \{n_{1_j} : 1 \leq j \leq k \}$.  Thus, for
each $n \geq r$, there exists $\bar{b}^{1_j}_n = ({a^{1_j}_0}
{a^{1_j}_1}{ a^{1_j}_2} \ldots {a^{1_j}_{k_n}}{ b^{1_j}_{k_n +1}}
\ldots ) \in S(\bar{a}^{1_j}, \eta)$ such that
$\sigma^n(\bar{b}^{1_j}_n) \in S(\bar{a}^{2_j}, \eta)$.

Thus, since $\{ \bar{b}^{1_j}_n : 1 \leq j \leq k \} \in
S_{D_H}(\{\bar{a}^{1_1}, \bar{a}^{1_2}, \ldots, \bar{a}^{1_k} \},
\eta)$,  this ensures $\overline{\sigma}^n(S_{D_H}(A_1, \epsilon))
\bigcap S_{D_H}(A_2, \epsilon) \neq \phi$. As the above can be done
for all $n \geq r$, $(\mathcal{K}(\Omega), \overline{\sigma})$ is
topological mixing.

The converse can be deduced similarly as above.
\end{proof}

\begin{Remark}
Similar techniques have been used to prove the above result in a
much more general setting \cite{ba,ro,sn1}. The result below also
uses techniques similar to the general form as in \cite{sn2}.
\end{Remark}

Whereas for sensitivity,
\begin{Proposition}
$(\Omega, \sigma)$ is cofinitely sensitive if and only if
$(\mathcal{K}(\Omega), \overline{\sigma})$ is cofinitely sensitive.
\end{Proposition}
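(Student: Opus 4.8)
The plan is to prove both implications directly in terms of sequences, exploiting that finite sets are dense in $\mathcal{K}(\Omega)$ and that the Hausdorff metric between two finite configurations is controlled by the worst-behaved pair of sequences. Recall that cofinite sensitivity of $(\Omega,\sigma)$ means there is a $\delta>0$ so that for every open $U$ in $\Omega$ the set $N_\sigma(U,\delta)$ of separation instances is cofinite. I would fix such a $\delta$ for $(\Omega,\sigma)$ and aim to produce a uniform sensitivity constant $\delta'$ (roughly $\delta$ up to the usual $\tfrac{\delta}{2(1+\delta)}$-type rescaling coming from the metric $D$) witnessing cofinite sensitivity on the hyperspace.

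For the forward direction I would argue on a basic Hausdorff neighborhood $S_{D_H}(A,\eta)$ where $A=\{\bar x^1,\dots,\bar x^s\}$ is a finite subset, since these form a dense collection. Pick one coordinate-block neighborhood $[x^1_0\cdots x^1_n]$ of $\bar x^1$ contained in the $\eta$-ball. Cofinite sensitivity of $\sigma$ gives a cofinite set $N_\sigma([x^1_0\cdots x^1_n],\delta)$ of instances $n$ at which two sequences agreeing with $\bar x^1$ on the first $n+1$ symbols are $\delta$-separated at time $n$. For each such $n$ I would build a second finite set $B$ that agrees with $A$ except that its first sequence is replaced by a perturbation $\bar y^1$ staying in the same cylinder but achieving $D(\sigma^n(\bar x^1),\sigma^n(\bar y^1))>\delta$. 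Then $D_H(\overline\sigma^n(A),\overline\sigma^n(B))$ is bounded below by this single-pair separation (the Hausdorff distance dominates the distance from $\sigma^n(\bar y^1)$ to the nearest point of $\overline\sigma^n(A)$, which I can keep large by choosing the perturbation to differ from \emph{every} $\sigma^n(\bar x^j)$, shrinking $\delta$ by a controlled factor if necessary). Since this works for every $n$ in a cofinite set, $N_{\overline\sigma}(S_{D_H}(A,\eta),\delta')$ is cofinite, giving cofinite sensitivity of $(\mathcal{K}(\Omega),\overline\sigma)$.

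For the converse I would use that $(\Omega,\sigma)$ embeds as the subsystem of singletons $\{\bar x\}$ in $(\mathcal{K}(\Omega),\overline\sigma)$, and that $D_H(\{\bar x\},\{\bar y\})=D(\bar x,\bar y)$. Given cofinite sensitivity of $\overline\sigma$ with constant $\delta$, fix $\bar x\in\Omega$ and a neighborhood $S_\epsilon(\{\bar x\})$; for each $n$ in the cofinite separating set there is $A\in S_\epsilon(\{\bar x\})$ with $D_H(\overline\sigma^n(\{\bar x\}),\overline\sigma^n(A))>\delta$, and the definition of $d_H$ forces some $\bar y\in A\subset S_\epsilon(\bar x)$ with $D(\sigma^n(\bar x),\sigma^n(\bar y))>\delta$. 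Hence the separating set for $\sigma$ at $\bar x$ contains the cofinite set, giving cofinite sensitivity of $\sigma$.

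The main obstacle I anticipate is the forward direction: bounding $D_H(\overline\sigma^n(A),\overline\sigma^n(B))$ from below by a single pair's separation simultaneously for all $n$ in the cofinite set with one fixed constant $\delta'$. A clean perturbation of $\bar x^1$ that works at \emph{one} instance $n$ is easy, but I need a perturbation whose separation persists across the whole cofinite family of instances, or else I must rebuild $B$ for each $n$ while keeping $B$ inside the \emph{same} fixed neighborhood $S_{D_H}(A,\eta)$ and keeping $\delta'$ independent of $n$. The resolution is to note that the cofinite separating set for the \emph{cylinder} $[x^1_0\cdots x^1_n]$ already encodes, for each instance, the existence of an in-cylinder perturbation, so one reconstructs $B=B_n$ per instance while only the fixed $\eta$ and $\delta'$ matter for the cofiniteness conclusion; care is needed only to ensure the chosen perturbation at time $n$ is $\delta'$-far from every element of $\overline\sigma^n(A)$, not merely from $\sigma^n(\bar x^1)$, which I would handle by a pigeonhole/triangle-inequality argument selecting the separation direction away from the finitely many points of $\overline\sigma^n(A)$.
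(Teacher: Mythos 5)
Your converse direction is exactly the paper's argument (work with singletons and use that the Hausdorff distance from a singleton $\{\sigma^n(\bar x)\}$ to $\overline\sigma^n(A_n)$ is the supremum of pointwise distances, forcing a witness $\bar y_n\in A_n$), and it is correct. The genuine gap is in the forward direction, precisely at the point you flag as needing ``care'': in a general subshift you cannot choose the perturbation $\bar y^1$ of $\bar x^1$ so that $\sigma^n(\bar y^1)$ is $\delta'$-far from \emph{every} point of $\overline\sigma^n(A)$. Cofinite sensitivity guarantees the \emph{existence} of one in-cylinder perturbation separated from $\sigma^n(\bar x^1)$ at time $n$; it offers no menu of ``separation directions'' to pigeonhole among, since the subshift may admit exactly one alternative continuation at the relevant positions. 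So nothing rules out the following scenario, for infinitely many $n$ in your cofinite set: with $A=\{\bar x^1,\bar x^2,\bar x^3\}$, the point $\sigma^n(\bar x^1)$ is close to $\sigma^n(\bar x^2)$, while every admissible perturbation $\bar y^1$ has $\sigma^n(\bar y^1)$ close to $\sigma^n(\bar x^3)$. Then for $B=\{\bar y^1,\bar x^2,\bar x^3\}$ the only possible witness is $\sigma^n(\bar y^1)$ (the points $\sigma^n(\bar x^j)$, $j\geq 2$, lie in both images), and it is close to $\overline\sigma^n(A)$; meanwhile $\sigma^n(\bar x^1)$ is close to $\sigma^n(\bar x^2)\in\overline\sigma^n(B)$. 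Hence $D_H(\overline\sigma^n(A),\overline\sigma^n(B))$ is small, no rescaling of $\delta$ repairs it, and perturbing a different single element instead of $\bar x^1$ runs into the same collision. Your proof supplies no argument excluding this, and none is available from sensitivity alone.

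The paper's resolution is genuinely different and worth comparing against: it perturbs \emph{every} element of $A$ (applying cofinite sensitivity in each cylinder $U_j$, with threshold $N_j$, and taking $N=\max_j N_j$), and then for each instant $l>N$ defines $C_l$ by a switching rule: keep the perturbation $\bar y^{i\,l}$ of $\bar x^i$ if $D(\sigma^l(\bar x^1),\sigma^l(\bar y^{i\,l}))\geq \frac{1}{2^{M+1}}$, and revert to $\bar x^i$ otherwise. The witness is then taken on the $A$ side rather than the $B$ side: $\sigma^l(\bar x^1)$ is $\frac{1}{2^{M+1}}$-far from every point of $\overline\sigma^l(C_l)$, because kept perturbations are far by the selection rule, while a reverted point $\sigma^l(\bar x^i)$ is far by the triangle inequality (its own perturbation is $\frac{1}{2^M}$-far from it yet $\frac{1}{2^{M+1}}$-close to $\sigma^l(\bar x^1)$). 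This sidesteps entirely the need to control where the perturbations land, which is exactly the step your single-perturbation construction cannot complete.
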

\begin{proof}
Let $(\Omega, \sigma)$ be cofinitely sensitive. We prove  that
$(\mathcal{K}(\Omega), \overline{\sigma})$ is cofinitely sensitive,
by showing that $\overline{\sigma}$ is cofinitely sensitive on the
set of all finite sets in $\mathcal{K}(\Omega)$.

It will suffice to prove the result in case of the alphabet set
$\mathcal{A}$ being discrete. In case $\mathcal{A}$ is equipped
with some metric $d$, the sensitivity constants can be suitably
modified.

Without loss of generality, let $\sigma$ be cofinitely sensitive
with sensitivity constant $\frac{1}{2^M}$. This implies that for any
neighborhood $[x_0 x_1 x_2\ldots x_n]$ of $\bar{x} = (x_0 x_1 x_2
\ldots)$, there exists $k_n \in \mathbb{N}$ such that for each $l
> k_n$, there exists $\bar{y}^l \in [x_0 x_1 x_2\ldots
x_n]$ such that $x_{l+1} x_{l+2} \ldots x_{l+M} \neq y^l_{l+1}
y^l_{l+2} \ldots y^l_{l+M}$.

Let $A = \{\bar{x}^1,\bar{x}^2,\ldots,\bar{x}^k\} \in
\mathcal{K}(\Omega)$. Let  $\mathcal{U} = <U_1, U_2, \ldots, U_k>$
be a neighborhood of  $A$ where $U_j = [x^j_0 x^j_1 x^j_2 \ldots
x^j_r]$. For each $j$ and each neighborhood $U_j$, there exists
$N_j$ such that for each $l> N_j$, $D(\sigma^l(\bar{x}^j),
\sigma^l(\bar{y}^{j \ l})) > \frac{1}{2^M}$, for some $\bar{y}^{j
\ l} \in U_j$.

Let $N = \max\{ N_1,N_2, \ldots, N_k\}$. For each $j$ and for each
$l \geq N$, there exists $\bar{y}^{j \ l} \in U_j$ such that
$x_{l+1} x_{l+2} \ldots x_{l+M} \neq y^{j \ l}_{l+1} y^{j \
l}_{l+2} \ldots y^{j \ l}_{l+M}$.

Therefore, for each $j$, for each $l > N$,
$D(\sigma^{l}(\bar{x}^j), \sigma^{l}(\bar{y}^{j \ l})) \geq
\frac{1}{2^M}$.

Construct the set $C_l :=\{\bar{z}^1,\bar{z}^2,\ldots,\bar{z}^k\}$
as,

\centerline{$\bar{z}^i = \left\{%
\begin{array}{ll}
    \bar{y}^{i \ l}, & \hbox{$D(\sigma^n(\bar{x}^1), \sigma^n(\bar{y}^{i \ l})) \geq \frac{1}{2^{M+1}}$;} \\
    \bar{x}^i, & \hbox{$\mbox{otherwise}$.} \\
\end{array}%
\right.$}

Therefore, $\sigma^l(\bar{x}^1)$ is at least $\frac{1}{2^{M+1}}$
apart from $\sigma^l(\bar{z}^i)$, $i=1,2,\ldots,k$. Thus,
$D_H(\overline{\sigma}^l(A), \overline{\sigma}^l(C_l)) \geq
\frac{1}{2^{M+1}}$. As such sets can be constructed for all $l >
N$, $(\mathcal{K}(\Omega), \overline{\sigma})$ is cofinitely
sensitive.

Conversely, Let $\bar{x} \in \Omega$ and let $\epsilon >0$ be
given. As $(\mathcal{K}(\Omega), \overline{\sigma})$ is cofinitely
sensitive with sensitivity constant $\delta$ and
$S_{\epsilon}(\{\bar{x}\})$ is $\epsilon$-neighborhood of
$\{\bar{x}\}$, there exists $n_0 \in \mathbb{N}$ such that for
each $n \geq n_0$, there exists $A_n \in
S_{\epsilon}(\{\bar{x}\})$ such that
$D_H(\overline{\sigma}^n(\{\bar{x}\}), \overline{\sigma}^n(A_n)) >
\delta$. Consequently, there exists a sequence $\bar{y_n} \in A_n
\subseteq S_{\epsilon}(\bar{x})$ such that
$D_H(\overline{\sigma}^n(\{\bar{x}\}),
\overline{\sigma}^n(\{\bar{y_n}\})) > \delta$. As the existence of
the set $A_n$ is guaranteed for all $n \geq n_0$, for each such
integer $n$, we obtain $\bar{y_n} \in S_{\epsilon}(\bar{x})$ such
that $D(\sigma^n(\bar{x}),\sigma^n(\bar{y_n}))
> \delta$. Hence $(\Omega, \sigma)$ is cofinitely sensitive.
\end{proof}

In case of transitive subshifts, cofinite sensitivity and syndetic
sensitivity turn out to be equivalent. Though, this equivalence
holds for any alphabet set, we prove it below only for the case of
discrete alphabet.

\begin{Proposition}
Let $(\Omega, \sigma)$ be a transitive subshift. Then, $(\Omega,
\sigma)$ is cofinitely sensitive if and only if $(\Omega, \sigma)$
is syndetically sensitive.
\end{Proposition}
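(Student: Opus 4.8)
The plan is to dispose of one implication immediately and to spend all effort on the other. The implication \emph{cofinitely sensitive} $\Rightarrow$ \emph{syndetically sensitive} needs no transitivity: a cofinite subset of $\mathbb{N}$ has finite complement, hence bounded gaps, hence is syndetic, so the same witnessing constant carries over. Thus the entire content lies in the converse, for which I would work throughout with cylinders. Since every basic open set in $\Omega$ contains a cylinder and $\sigma$ is the shift, for $U=[x_0x_1\ldots x_n]$ the instance set $N_\sigma(U,\tfrac{1}{2^M})$ consists exactly of those $l$ for which two sequences agreeing with $x_0\ldots x_n$ on the first $n+1$ coordinates differ somewhere in a window of length $M+1$ beginning at $l$; equivalently, of those $l$ admitting a ``free'' coordinate in $[l,l+M]$. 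In these terms syndetic sensitivity says the free coordinates of \emph{each} cylinder have bounded gaps, whereas cofinite sensitivity says they eventually have gaps bounded by a single constant valid for \emph{all} cylinders at once.

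The key elementary step is a window-absorption estimate that converts a gap bound into a cofinite instance set at the cost of enlarging the constant. Suppose $N_\sigma(U,\tfrac{1}{2^M})$ is syndetic with gap bound $p$. Then every block $\{l,l+1,\ldots,l+p-1\}$ contains an instance $t$, whose witnessing pair $\bar y,\bar z\in U$ differs at a coordinate $c\in[t,t+M]\subseteq[l,\,l+p+M-1]$. Reading off the defining series of $D$, that single mismatch already forces $D(\sigma^l\bar y,\sigma^l\bar z)\ge \tfrac{1}{2^{\,c-l}}\ge \tfrac{1}{2^{\,p+M-1}}>\tfrac{1}{2^{\,p+M}}$, so $l\in N_\sigma(U,\tfrac{1}{2^{\,p+M}})$ for every sufficiently large $l$; that is, $N_\sigma(U,\tfrac{1}{2^{\,p+M}})$ is cofinite. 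This reduces the converse to one purely combinatorial assertion: that the per-cylinder syndetic gaps $p(U)$ admit a single bound $p^*$ independent of $U$. Once that is known, the uniform constant $\tfrac{1}{2^{\,p^*+M}}$ witnesses cofinite sensitivity for every cylinder simultaneously, and the proof is finished.

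Establishing this uniform gap bound is where transitivity must enter, and it is the hard part. The approach I would take is to fix, once and for all, a finite branching gadget supplied by sensitivity — two allowed words of equal length sharing a common prefix and first differing within a window of size $M$ of its end — and then to use the word-connecting form of transitivity (for allowed $u,v$ there is a word $s$ with $usv$ allowed, equivalently the existence of a transitive point) to splice this gadget into any prescribed cylinder near any prescribed large coordinate, manufacturing the required pair of separated extensions. The delicate point, and the genuine obstacle, is that the length of the connecting word $s$ is not freely prescribable and that \emph{a priori} the syndetic gap $p([w])$ may grow with $|w|$; transitivity must therefore be exploited to \emph{cap} the gap uniformly over all cylinders, not merely to realize separation at individual positions. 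I would attempt this by transporting separation events of a single fixed reference cylinder, whose gap $p_0$ is finite by hypothesis, along orbit segments of a transitive point into an arbitrary target cylinder, arranging the witnessing window so that its size is governed by $p_0$ and the fixed gadget data alone and not by $|w|$ or by the target coordinate. Closing this last gap — ruling out cylinders that force arbitrarily long rigid blocks far out along their extensions — is the crux on which the entire converse rests.
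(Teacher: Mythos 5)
Your treatment of the easy implication, your cylinder/free-coordinate reformulation, and your window-absorption lemma are all correct: if $N_\sigma(U,\frac{1}{2^M})$ is syndetic with gap bound $p$, then $N_\sigma\bigl(U,\frac{1}{2^{p+M}}\bigr)$ is indeed cofinite, and this is essentially the same computation the paper performs (there phrased as: a mismatch among the coordinates $l+1,\ldots,l+K$ forces $D(\sigma^l\bar{y},\sigma^l\bar{z})\geq\frac{1}{2^K}$). You have thereby reduced the converse to a single assertion --- that the per-cylinder gap bounds $p(U)$ admit one common bound $p^*$ --- and this assertion you do not prove; the splicing scheme you outline is only a plan, and you say so yourself. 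So, judged as a proof, the proposal is incomplete, and the gap sits exactly where you locate it.

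You should know two things about that gap. First, it is the very same gap as in the paper: the paper's proof fixes a transitive point $\bar{x}$, lets $K$ be the syndetic bound of one cylinder $[x_0 x_1 \ldots x_n]$, and then asserts, with no argument beyond ``it can be seen,'' that $K$ is also the bound for every cylinder $[x_{m+1} x_{m+2}\ldots x_{m+r}]$; everything after that sentence is your absorption lemma. But the natural comparison only bounds the gaps of $[x_{m+1}\ldots x_{m+r}]$ by those of the long prefix cylinder $[x_0\ldots x_{m+r}]$, and nothing bounds those uniformly in $m+r$. Second, the gap cannot be closed, because the uniform bound --- and with it the Proposition --- is false. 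In a Sturmian subshift (minimal, hence transitive, and syndetically sensitive), a cylinder $[w]$ corresponds to an arc $I_w$ of the circle whose length tends to $0$ as $|w|\to\infty$; the coordinates at which two points of $[w]$ can disagree are the times $c$ at which the irrational rotation carries $I_w$ across an endpoint of the coding partition, and consecutive such times differ by at least the least $q$ with $\|q\alpha\|<|I_w|$, a quantity that tends to infinity as $|w|\to\infty$. Hence for every $\delta=\frac{1}{2^M}$ there are cylinders $[w]$ whose instance sets $N_\sigma([w],\delta)$ omit infinitely many integers: Sturmian subshifts are syndetically but not cofinitely sensitive, exactly as stated in \cite{subru} and contrary to this Proposition and its corollaries. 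Your closing remark --- that the crux is ruling out cylinders forcing arbitrarily long rigid blocks far out along their extensions --- identifies precisely the phenomenon Sturmian subshifts exhibit; the crux is a counterexample, not a missing lemma.
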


\begin{proof}
Since every cofinitely sensitive map is syndectically sensitive, we
only need to prove that $(\Omega, \sigma)$ is cofinitely sensitive
whenever $(\Omega, \sigma)$ is syndetically sensitive.

 Let $(\Omega, \sigma)$ be syndetically sensitive and let
$\bar{x}=(x_0 x_1 x_2 x_3 \ldots x_n \ldots) \in \Omega$ be the
point with dense orbit. Let $K$ be the bound for syndetic
sensitivity of the cylinder $[x_0 x_1 x_2 x_3 \ldots x_n]$. It can
be seen that $K$ is the bound for syndetic sensitivity of any
other cylinder of the form $[x_{m+1} x_{m+2}  \ldots x_{m+r}]$.

Let  $\bar{y} \in Orb(\bar{x})$ and let $[y_0 y_1 y_2 \ldots y_r]$
be a neighborhood of $\bar{y}$. As the bound for syndetic
sensitivity for any cylinder around $\bar{y} \in Orb(\bar{x})$
must be $K$, there exists some $N \in \mathbb{N}$ such that for
each instant $l
>N$, there exists $\bar{z}^l \in [y_0 y_1 y_2\ldots y_r]$ such that
$y_{l+1} y_{l+2} \ldots y_{l+K} \neq z^l_{l+1} z^l_{l+2} \ldots
z^l_{l+K}$. Consequently, $D(\sigma^l(\bar{y}),
\sigma^l(\bar{z}^l)) \geq \frac{1}{2^K}$. Thus $(\Omega, \sigma)$
is cofinitely sensitive at each point of the orbit of $\bar{x}$
with the same sensitivity constant. As ${Orb(\bar{x})}$ is dense,
$(\Omega, \sigma)$ is cofinitely sensitive.
\end{proof}

\begin{Cor}
Let $(\Omega, \sigma)$ be a transitive subshift. $(\Omega,
\sigma)$ is syndetically sensitive if and only if
$(\mathcal{K}(\Omega), \overline{\sigma})$ is syndetically
sensitive.
\end{Cor}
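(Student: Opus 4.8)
The plan is to obtain this as a genuine corollary of the two preceding propositions, treating the two implications separately, since transitivity is needed for only one of them. For the forward implication I would chain the established equivalences; for the backward implication I would argue directly, by pushing syndetic sensitivity down to singletons, exactly as in the converse half of the cofinite-sensitivity proposition.

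\textbf{Forward direction} ($(\Omega,\sigma)$ syndetically sensitive $\Rightarrow$ $(\mathcal{K}(\Omega),\overline{\sigma})$ syndetically sensitive). Here I would invoke the transitivity hypothesis. Since $(\Omega,\sigma)$ is a transitive subshift, the preceding Proposition gives that syndetic sensitivity of $(\Omega,\sigma)$ is equivalent to cofinite sensitivity of $(\Omega,\sigma)$, so $(\Omega,\sigma)$ is cofinitely sensitive. The Proposition on cofinite sensitivity (valid for any subshift) then yields that $(\mathcal{K}(\Omega),\overline{\sigma})$ is cofinitely sensitive. Finally the general implication \emph{cofinitely sensitive} $\Rightarrow$ \emph{syndetically sensitive} from the preliminaries gives that $(\mathcal{K}(\Omega),\overline{\sigma})$ is syndetically sensitive.

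\textbf{Backward direction} ($(\mathcal{K}(\Omega),\overline{\sigma})$ syndetically sensitive $\Rightarrow$ $(\Omega,\sigma)$ syndetically sensitive). This I would prove directly, with no appeal to transitivity. Let $\delta$ be the syndetic-sensitivity constant of $\overline{\sigma}$. Fix $\bar{x}\in\Omega$ and $\epsilon>0$ and consider the hyperspace neighborhood $\mathcal{U}=S_{\epsilon}(\{\bar{x}\})$, so that $N_{\overline{\sigma}}(\mathcal{U},\delta)$ is syndetic. For each $n$ in this set choose $A,B\in\mathcal{U}$ with $D_H(\overline{\sigma}^n(A),\overline{\sigma}^n(B))>\delta$; by the triangle inequality one of them, say $A$, satisfies $D_H(\overline{\sigma}^n(\{\bar{x}\}),\overline{\sigma}^n(A))>\frac{\delta}{2}$. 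Since $\overline{\sigma}^n(\{\bar{x}\})$ is a singleton, this forces some $\bar{a}\in A$ with $D(\sigma^n(\bar{x}),\sigma^n(\bar{a}))>\frac{\delta}{2}$, while $A\in S_{\epsilon}(\{\bar{x}\})$ forces $\bar{a}\in S_{\epsilon}(\bar{x})$. Hence every such $n$ lies in $N_{\sigma}(S_{\epsilon}(\bar{x}),\frac{\delta}{2})$, so $N_{\sigma}(S_{\epsilon}(\bar{x}),\frac{\delta}{2})\supseteq N_{\overline{\sigma}}(\mathcal{U},\delta)$ is syndetic. As $\bar{x}$ and $\epsilon$ are arbitrary and the constant $\frac{\delta}{2}$ is uniform, $(\Omega,\sigma)$ is syndetically sensitive.

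The conceptual obstacle, and the reason transitivity must appear, is that one cannot lift syndetic sensitivity from $\Omega$ to $\mathcal{K}(\Omega)$ by a direct perturbation argument of the kind used for cofinite sensitivity. Separating a finite set $A=\{\bar{x}^1,\ldots,\bar{x}^k\}$ in the Hausdorff metric at a given instant $n$ requires a witnessing perturbation of each coordinate to be available at that same $n$, i.e. $n$ must lie in $\bigcap_{j} N_{\sigma}(U_j,c)$ for a common constant $c$; but a finite intersection of syndetic sets need not be syndetic, so the construction that succeeds in the cofinite case breaks down. Transitivity resolves exactly this difficulty: it upgrades syndetic sensitivity of $\Omega$ to cofinite sensitivity, and cofinite sets \emph{are} closed under finite intersection, so the lift-up proceeds through the cofinite layer. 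By contrast the backward implication requires no transitivity, since pushing sensitivity down to singletons only enlarges the instance set, preserving syndeticity automatically.
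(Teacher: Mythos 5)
Your proof is correct and follows exactly the route the paper intends for this corollary: the forward direction chains the preceding proposition (transitivity upgrades syndetic to cofinite sensitivity on $\Omega$) with the cofinite-sensitivity equivalence for $(\mathcal{K}(\Omega),\overline{\sigma})$ and the general implication \emph{cofinite} $\Rightarrow$ \emph{syndetic}, while the backward direction pushes sensitivity down to singletons just as in the converse halves of the paper's sensitivity and cofinite-sensitivity propositions. Your closing remark correctly identifies why transitivity is indispensable here — syndetic sets are not closed under finite intersection, so the direct lifting argument used in the cofinite case cannot be imitated — which is precisely why the paper routes the forward implication through cofinite sensitivity.
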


One of the important examples of  subshifts are the \textit{Sturmian
subshifts }( see \cite{subru}). They are minimal shifts and it has
been shown in \cite{subru} that Sturmian subshifts are syndetically
sensitive.

\begin{Cor} All Sturmian subshifts are cofinitely sensitive. And
hence, the corresponding induced map $\overline{\sigma}$ on the
hyperspace of its compact subsets is also cofinitely sensitive.
\end{Cor}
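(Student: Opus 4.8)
The plan is to obtain both assertions as a direct chaining of the two equivalences established earlier, so the only genuine task is to verify that a Sturmian subshift satisfies the hypotheses those results require. First I would record that a Sturmian subshift $(\Omega,\sigma)$ is minimal, and hence transitive: minimality forces every orbit to be dense, so in particular there is a point with dense orbit and the system is transitive. This is precisely the standing assumption of the Proposition asserting that, for a transitive subshift, cofinite sensitivity and syndetic sensitivity are equivalent.

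Next I would invoke the stated fact from \cite{subru} that every Sturmian subshift is syndetically sensitive. Feeding this into the transitive-subshift equivalence immediately yields that $(\Omega,\sigma)$ is cofinitely sensitive, which settles the first claim. For the second claim I would apply the Proposition stating that $(\Omega,\sigma)$ is cofinitely sensitive if and only if $(\mathcal{K}(\Omega),\overline{\sigma})$ is cofinitely sensitive; having just shown the base system is cofinitely sensitive, the induced system $(\mathcal{K}(\Omega),\overline{\sigma})$ inherits cofinite sensitivity as well.

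The main (and essentially only) point to be careful about is the transitivity step, since the transitive-subshift equivalence is stated only for transitive systems; the whole argument hinges on noting that minimality of Sturmian subshifts supplies transitivity, indeed a dense orbit. Once that observation is in place, the remainder is a mechanical composition of the two equivalences and requires no new metric estimates or constructions.
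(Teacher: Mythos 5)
Your proposal is correct and follows exactly the paper's intended route: Sturmian subshifts are minimal (hence transitive with a dense orbit) and syndetically sensitive by \cite{subru}, so the transitive-subshift equivalence gives cofinite sensitivity, and the base-versus-hyperspace equivalence for cofinite sensitivity then transfers it to $(\mathcal{K}(\Omega),\overline{\sigma})$. The paper leaves this chaining implicit, but your argument is precisely the one it relies on, including the key observation that minimality supplies the transitivity hypothesis.
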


The above corollary contradicts the result in \cite{subru} which
states that no Sturmian subshift is cofinitely sensitive. The
error there is a trivial one (follows from the definition there).

There exist subshifts $(\Omega, \sigma)$ such that
$(\mathcal{K}(\Omega), \overline{\sigma})$ is not sensitive
\cite{ishs}. The example constructed there  is not syndetically
sensitive, and hence not cofinitely sensitive. Does that mean all
subshifts $(\Omega, \sigma)$, for which $(\mathcal{K}(\Omega),
\overline{\sigma})$ is sensitive, need to be either syndetically
sensitive or cofinitely sensitive. We answer this in the negative,
by giving an example (from \cite{subru}) of a subshift $(\Omega,
\sigma)$ which is not syndetically sensitive, but for which
$(\mathcal{K}(\Omega), \overline{\sigma})$ happens to be
sensitive.

\begin{ex}
Let $(n_k)$ be a strictly increasing sequence of natural numbers.
Inductively, define a sequence of words $w_k$ as,

$$w_1 =0, w_2 = 1^{n_1}, w_3 = w_1w_2,  \ldots, w_{2k} = 1^{n_k},
w_{2k+1} = w_1 w_2 w_3 \ldots w_{2k},  \ \ldots $$

Put $\bar{x}=w_1 w_2 w_3 \ldots w_k \ldots \in \Sigma =
\{0,1\}^\mathbb{N}$

Clearly, $\bar{x}$ is a recurrent point for the dynamical system
$(\Sigma, \sigma)$, where $\sigma$ denotes the left shift on the
sequence space of two symbols.

\centerline{Define $X := \overline{Orb(\bar{x})}$ where,
$Orb(\bar{x}) = \{ \sigma^n(\bar{x}) : n \geq 0 \}$}

It has been shown in \cite{subru}, that $X$ defined above is
sensitive, but not syndetically sensitive. We here prove that
$(\mathcal{K}(X), \overline{\sigma})$ is sensitive.

 We first show that elements of the form $w_1 w_2
\ldots w_r 1^{\infty}$ are limit points of the orbit of $\bar{x}$
under the map $\sigma$. It is sufficient to show that all elements
of the form $w_1 w_2 \ldots w_{2k-1} 1^{\infty}$ are limit points
of the orbit.

It can be seen that $w_{2k+1}$ is the word $w_1 w_2 \ldots
w_{2k-1} 1^{n_k}$. Similarly $w_{2k+3}$ contains the word $w_1 w_2
\ldots w_{2k-1} 1^{n_k} 1^{n_{k+1}}$, $w_{2k+5}$ contains the word
$w_1 w_2 \ldots w_{2k-1} 1^{n_k} 1^{n_{k+1}} 1^{n_{k+2}}$ and so
on. Consequently, all the sequences of the form $w_1 w_2 \ldots
w_{2k-1} 1^{\infty}$ are limit points of the orbit and hence every
sequence of the form $w_1 w_2 \ldots w_r 1^{\infty}$ is a limit
point of the orbit.

As $Orb(\bar{x})$ is dense in $X$, the set $\mathcal{D} = \{
\{\bar{x}^1, \bar{x}^2, \ldots, \bar{x}^r\} : \bar{x}^i \in
Orb(\bar{x})$ for all $i, \ r \geq 1 \}$ is dense in
$\mathcal{K}(X)$. We shall, equivalently, show that
$\overline{\sigma}$ is sensitive on $\mathcal{D}$.

Let $\{\bar{x}^1, \bar{x}^2, \ldots , \bar{x}^k\} \in
\mathcal{D}$. Let $<U_1, U_2, \ldots, U_k>$ be its neighborhood,
where each $U_i$ is a neighborhood of $\bar{x}^i$. As $\bar{x}^i =
\sigma^{k_i}(\bar{x})$, let $[x^i_0 x^i_1 x^i_2 \ldots x^i_j
w_{n_i}]  \subseteq U_i$ for some $n_i$. Let $N = \max\{n_i : 1
\leq i \leq k\}$. Then, there exists $\bar{y}^i = x^i_0 x^i_1
x^i_2 \ldots w^N 1^{\infty} \in U_i$ for each $i$. Consider the
set $B = \{\bar{y}^1, \bar{y}^2, \ldots, \bar{y}^k\} \in <U_1,
U_2, \ldots, U_k>$.  The set $B$ reduces to the constant sequence
$111111 \ldots$ after some finite iterations. Thus, there exists
$l \in \mathbb{N}$ such that $D_H(\overline{\sigma}^l(A),
\overline{\sigma}^l(B)) \geq \frac{1}{2}$. Consequently,
$\overline{\sigma}$ is sensitive on $\mathcal{D}$.
\end{ex}

And for expansivity, we have
\begin{Proposition}
For any alphabet set $\mathcal{A}$, $(\mathcal{K}(\Omega),
\overline{\sigma})$ is expansive implies $(\Omega, \sigma)$ is
expansive. However, the converse does not hold in general.
\end{Proposition}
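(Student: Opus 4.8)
The plan is to treat the two assertions separately, using the isometric embedding of $\Omega$ into its hyperspace for the implication, and the full shift to refute the converse.

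For the forward implication I would exploit the map $\bar{x} \mapsto \{\bar{x}\}$ sending a point to its associated singleton. This embeds $(\Omega, \sigma)$ into $(\mathcal{K}(\Omega), \overline{\sigma})$: it is an isometry onto its image since $d_H(\{\bar{x}\}, \{\bar{y}\}) = D(\bar{x}, \bar{y})$, and it is equivariant because $\overline{\sigma}(\{\bar{x}\}) = \{\sigma(\bar{x})\}$, whence $\overline{\sigma}^k(\{\bar{x}\}) = \{\sigma^k(\bar{x})\}$ for every $k$. Assuming $\overline{\sigma}$ is $\delta$-expansive, I would take any two distinct points $\bar{x}, \bar{y} \in \Omega$, regard them as distinct singletons $\{\bar{x}\}, \{\bar{y}\} \in \mathcal{K}(\Omega)$, obtain $k$ with $d_H(\overline{\sigma}^k(\{\bar{x}\}), \overline{\sigma}^k(\{\bar{y}\})) > \delta$, and read this back as $D(\sigma^k(\bar{x}), \sigma^k(\bar{y})) > \delta$. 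Thus $\sigma$ is $\delta$-expansive, and this half works verbatim for any alphabet set.

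For the converse I would produce a single counterexample: the full shift $\Omega = \mathcal{A}^{\mathbb{N}}$ with $|\mathcal{A}| \ge 2$. That $(\Omega, \sigma)$ is expansive is standard, since two distinct sequences differ in some coordinate $j$ and then $\sigma^j$ brings that disagreement to the $0$-th coordinate, giving $D(\sigma^j(\bar{x}), \sigma^j(\bar{y})) \ge 1 > \tfrac{1}{2}$; hence $\sigma$ is $\tfrac{1}{2}$-expansive. To defeat a prescribed $\delta > 0$ on the hyperspace, I would choose $r$ with $1/2^r < \delta$ and compare the fixed set $A = \Omega$ (noting $\overline{\sigma}(\Omega) = \Omega$, as $\sigma$ is onto) with the proper compact set $B = \Omega \setminus [0^{\,r+2}]$ obtained by forbidding the cylinder of sequences beginning with $r+2$ zeros. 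Then $B$ is clopen, hence compact, and proper (it omits $0^{\infty}$); moreover $B$ is $1/2^r$-dense in $\Omega$, since every length-$(r+1)$ prefix $u$ is realized by a point $u1\ldots \in B$, so $d_H(A,B) \le 1/2^r < \delta$. The decisive point is that the two orbits never separate: because $\sigma$ is surjective on the full shift, $\sigma(B) = \Omega$ (any $\bar{w}$ has a preimage $a\bar{w} \in B$, taking $a = 1$ when $\bar{w}$ starts with $r+1$ zeros), so $\overline{\sigma}^n(B) = \Omega = \overline{\sigma}^n(A)$ for all $n \ge 1$, while at $n = 0$ the sets are within $1/2^r$. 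Hence $\sup_{n} d_H(\overline{\sigma}^n(A), \overline{\sigma}^n(B)) \le 1/2^r < \delta$ with $A \neq B$, and since $\delta$ was arbitrary, $\overline{\sigma}$ is not expansive.

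The main obstacle is exactly this last step: engineering a proper subset that stays Hausdorff-close to $\Omega$ under all iterates. The resolution I would rely on is the strong surjectivity of the shift — forbidding one deep cylinder leaves a set that the shift immediately spreads back onto the whole space, so only the initial discrepancy survives, and it is already smaller than $\delta$.
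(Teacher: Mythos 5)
Your proof is correct, and while the forward implication is exactly the paper's argument (singletons embed isometrically and equivariantly, so a separating time for $\{\bar{x}\},\{\bar{y}\}$ is a separating time for $\bar{x},\bar{y}$), your counterexample to the converse is genuinely different from the paper's. The paper also works in the two-symbol full shift and, for a candidate constant $\delta$ with $\frac{1}{2^n}<\delta$, takes $S_1$ (resp.\ $S_2$) to be the set of sequences that are all $0$'s except for a single block of $1$'s of length at most $n$ (resp.\ at most $n+1$); both sets satisfy $\overline{\sigma}(S_i)=S_i$, so they are distinct compact sets whose Hausdorff distance stays frozen at a value below $\delta$ for all iterates. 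Your mechanism is different: you take $A=\Omega$ and $B=\Omega\setminus[0^{\,r+2}]$, and instead of invariance you use surjectivity of $\sigma$ to show $\overline{\sigma}(B)=\Omega=\overline{\sigma}(A)$, so the two orbits literally coincide from time $1$ onward and only the initial (already sub-$\delta$) discrepancy ever exists. Your version is somewhat easier to verify — compactness of $B$ is automatic (it is clopen in a compact space) and the ``never separate'' claim is trivial once $\sigma(B)=\Omega$ is checked — whereas the paper's version exhibits the stronger phenomenon of two distinct $\overline{\sigma}$-fixed sets at arbitrarily small positive distance. One small caveat: you should fix a finite alphabet, say $\mathcal{A}=\{0,1\}$, rather than allowing any $|\mathcal{A}|\ge 2$; for an infinite alphabet $\Omega$ itself is not compact, so $A=\Omega$ would not lie in $\mathcal{K}(\Omega)$ (and for non-discrete metric alphabets even expansivity of the base shift fails). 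Since a single counterexample suffices, this does not affect the proof.
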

\begin{proof}
Let $(\mathcal{K}(\Omega), \overline{\sigma})$ be
$\delta$-expansive and let $\bar{x} ,\bar{y} \in \Omega$. Then, as
$\overline{\sigma}$ is expansive, for $\{x\}, \{y\} \in
\mathcal{K}(\Omega)$, there exists $k \in \mathbb{N}$ such that
$D_H ( \overline{\sigma}^k(\{\bar{x}\}),
\overline{\sigma}^k(\{\bar{y}\})) \geq \delta$. Consequently,  $D(
\sigma^k(\bar{x}),\sigma^k(\bar{y})) \geq \delta$. Thus, $(\Omega,
\sigma)$ is also $\delta$-expansive.

We now provide an example to show that the converse is not true.

Let $\Sigma$ be the sequence space of two symbols $0$ and $1$ and
let $\mathcal{K}(\Sigma)$ be the hyperspace of all non empty
compact subsets of $\Sigma$. It can be easily observed that
$(\Sigma, \sigma)$ is expansive with expansivity constant
$\frac{1}{2}$. However, we prove that the system
$(\mathcal{K}(\Sigma), \overline{\sigma})$ is not expansive.

Let if possible, $(\mathcal{K}(\Sigma), \overline{\sigma})$ be
expansive with expansivity constant $\delta$. Let $ n \in
\mathbb{N}$ such that $\frac{1}{2^n} < \delta$.

Let $S_1$ be the set of all sequences comprising of all $0$'s except
one string of $1$'s of length $r$, $ 0 \leq r \leq n$. Let $S_2$ be
the set of all sequences comprising of all $0$'s except one string
of $1$'s of length $r$, $0 \leq r \leq n+1$. Then, $D_H(S_1, S_2) =
\frac{1}{2^{n+1}}$. Also, $\overline{\sigma}(S_i) = S_i$, $i= 1,2$.
Thus, for any $k \in \mathbb{N}$, $D_H( ( \overline{\sigma}^k(S_1),
\overline{\sigma}^k(S_2)) = D_H(S_1, S_2) = \frac{1}{2^{n+1}} <
\delta$ which contradicts the definition of $\delta$.

Thus, the system $(\mathcal{K}(\Sigma), \overline{\sigma})$ is not
expansive.
\end{proof}

A similar result for expansivity holds in case of a general
dynamical system $(X,f)$. See \cite{sn1} for details.



\end{document}